\newtheorem{theorem}{Theorem}
\newtheorem{proposition}[theorem]{Proposition}
\newtheorem{lemma}[theorem]{Lemma}
\theoremstyle{remark}
\newtheorem{remark}[theorem]{Remark}
\newcommand*{\rom}[1]{\expandafter\@slowromancap\romannumeral #1@}
\newcommand{\ls}{\lesssim}
\newcommand{\la}{\langle}
\newcommand{\ra}{\rangle}
\newcommand{\R}{\mathbb{R}}
\newcommand{\Z}{\mathbb{Z}}
\newcommand{\pa}{\partial}
\newcommand{\ep}{\epsilon}
\newcommand{\al}{\alpha}
\definecolor{light-gray1}{gray}{0.90}
\definecolor{light-gray2}{gray}{0.80}
\definecolor{light-gray3}{gray}{0.60}
\numberwithin{equation}{section}
\numberwithin{theorem}{section}
\numberwithin{table}{section}
\numberwithin{figure}{section}
\title[BBM regularization of KdV]{On the Benjamin-Bona-Mahony regularization of the Korteweg-de Vries equation}
\date{\today}
\author[Y. Hong]{Younghun Hong}
\address{Department of Mathematics, Chung-Ang University, Seoul 06974, South Korea}
\email{yhhong@cau.ac.kr}
\author[J. Jang]{Junyeong Jang}
\address{Department of Mathematics, Chung-Ang University, Seoul 06974, South Korea}
\email{jyjang0119@cau.ac.kr}
 \author[C. Yang]{Changhun Yang}
\address{Department of Mathematics, Chungbuk National University, Cheongju-si 28644, Chungcheongbuk-do, Korea}
\email{chyang@chungbuk.ac.kr}
\begin{document}

\begin{abstract}
The Benjamin-Bona-Mahony equation (BBM) is introduced as a regularization of the Korteweg-de Vries equation (KdV) for long water waves [T. B. Benjamin, J. L. Bona, and J. J. Mahony, Philos. Trans. Roy. Soc. London Ser. A 272(1220) (1972), pp. 47–78]. In this paper, we establish the convergence from the BBM to the KdV for energy class solutions. As a consequence, employing the conservation laws, we extend the known temporal interval of validity for the BBM regularization.
\end{abstract}

\maketitle

\section{Introduction}

\subsection{Background}

The Benjamin-Bona-Mahony equation (BBM)
\begin{equation}\label{eq: BBM0}
\partial_t (1-\al\partial_x^2)u+\partial_x u+\al \partial_x(u^2)=0
\end{equation}
is a model equation describing the uni-directional propagation of long water waves, where $u=u(t,x): I(\subset\mathbb{R})\times\mathbb{R}\to\mathbb{R}$ is the amplitude of a water surface from an equilibrium and $\alpha>0$ represents $\frac{\textup{(amplitude)}}{\textup{(depth)}}$ for water waves\footnote{In the literature, $\frac{\textup{(amplitude)}}{\textup{(depth)}}$ is typically denoted by $\epsilon$. However, in this article, a different Greek letter $\alpha$ is assigned to this physical quantity, and let $\epsilon=\sqrt{\alpha}$ for notational convenience (see Remark \ref{remark: reformulation of BBM} $(ii)$).}. This equation has been introduced in the original work of Benjamin, Bona and Mahony \cite{BBM1972} as an alternative to the Korteweg-de Vries equation (KdV) in the theory of water waves. Indeed, it is known that for long waves, the KdV equation
\begin{equation}\label{eq: KdV0}
\partial_t u+\partial_x u+\al\partial_x^3 u+\alpha \partial_x(u^2)=0
\end{equation}
gives a formal approximation to the water wave problem in the sense that
\begin{equation}\label{eq: water wave problem}
\partial_t u+\partial_x u+\al\partial_x^3 u+\alpha \partial_x(u^2)=O(\alpha^2),
\end{equation}
provided that the parameter $\al>0$ is small enough (see \cite{Cra85, KN86, SW00, SW02, BCL2005, Wri2005, Dul12} for rigorous justification. We also refer to Lannes \cite[Chapter 7]{Lannes2013} and Schneider-Uecker \cite[Chapter 12]{SU2017} for more details). However, if $u(t,x)$ is regular, collecting all small terms in $O(\alpha)$, it can be written as 
$$\partial_t u+\partial_x u=O(\al).$$
Then, differentiating in $x$ twice and substituting by $\partial_x^3 u=-\partial_t\partial_x^2u+O(\al)$, \eqref{eq: water wave problem} becomes
$$\partial_t (1-\al\partial_x^2)u+\partial_x u+\al \partial_x(u^2)=O(\al^2).$$
Subsequently, dropping the small $O(\alpha^2)$-term, the BBM \eqref{eq: BBM0} is obtained. This formal analysis shows that the BBM \eqref{eq: BBM0} can be considered as another approximation of the same order to the original water wave problem as the KdV.

One of notable features of the BBM is that, compared to the KdV, the derivative nonlinearity $\partial_x(u^2)$ is regularized by inverting $(1-\al\partial_x^2)$ in \eqref{eq: BBM0}, so it is easier to analyze. For this reason, the well-posedness of the BBM is established \cite{BBM1972, BT2009, Rou2010,Pan2011} earlier than that of the KdV \cite{Bourgain1993_2, KPV1991,KPV1993,KPV1996, CKSTT2003, KT2006, KV2019}, and its proof is much simpler. Another important aspect is that as will be described below, the BBM also approximates the KdV in the long-wave limit \cite{BBM1972, BPS1983} (see Remark \ref{remark: long-wave reformulation}). Therefore, in conclusion, the BBM can be used as a \textit{regularized} alternative to the KdV for the water wave problem.

This regularization method, called the \textit{BBM regularization} or the \textit{BBM trick}, is known to be robust, and has been employed for many other equations. In a similar context, the Boussinesq system is regularized by the BBM \cite{AABCW2006}, and the ill-posed \textit{bad} Boussinesq system is formally converted into the \textit{regularized(or improved)} one \cite{Whi1974, PW1997, BCS2002}. Another application of the BBM trick can be found in the derivation of nonlinear Schrödinger equations with improved dispersion from deep water models \cite[Section 8]{Lannes2013} or in the context of nonlinear optics \cite{CL2009, Lan2011}.

\subsection{Main results}

In this paper, we revisit the proof of the BBM regularization of the KdV in the classical work of Bona, Pritchard and Scott \cite[Theorem 1]{BPS1983}, but we try to investigate the problem using the Fourier analysis methods developed in analysis of dispersive equations.

To set up the problem, inserting $u(t,x)=u_\epsilon(\epsilon^2 t, x-t)$ with $\ep=\sqrt{\al}$ in the BBM \eqref{eq: BBM0} and inverting $(1-\epsilon^2\partial_x^2)$, we deduce the rescaled BBM equation ($\textup{BBM}_\epsilon$)
\begin{equation}\label{eq: BBM}
\partial_tu_\epsilon+\frac{\partial_x^3}{1-\epsilon^2\partial_x^2}u_\epsilon+\frac{\partial_x}{1-\epsilon^2\partial_x^2}(u_\epsilon^2)=0,
\end{equation}
where $u_\epsilon(t,x): I(\subset\mathbb{R})\times\mathbb{R}\to\mathbb{R}$ and $\frac{1}{1-\epsilon^2\partial_x^2}$ is the Fourier multiplier of symbol $\frac{1}{1+\epsilon^2\xi^2}$. By the series expansion $\frac{1}{1-\epsilon^2\partial_x^2}=1+\epsilon^2\partial_x^2+\epsilon^4\partial_x^4+\epsilon^6\partial_x^6+\cdots$, the normalized KdV
\begin{equation}\label{eq: KdV}
\partial_tw+\partial_x^3w+\partial_x(w^2)=0
\end{equation}
is formally derived in the limit $\epsilon\to 0$. In this article, we are concerned with the convergence from the $\textup{BBM}_\epsilon$ to the KdV \eqref{eq: KdV}.

\begin{remark}\label{remark: reformulation of BBM}
$(i)$ For the BBM regularization, the convergence from \eqref{eq: BBM} to \eqref{eq: KdV} is less confusing than its equivalent formulation from \eqref{eq: BBM0} to \eqref{eq: KdV0}, since the coefficients of the target equation \eqref{eq: KdV} are independent of the parameter for which the limit will be taken.\\
$(ii)$ For notational convenience, we denote the physical quantity $\frac{\textup{(amplitude)}}{\textup{(depth)}}$ by $\epsilon^2$ instead of $\epsilon$, since the natural frequency scaling for \eqref{eq: BBM} is $\xi\sim\frac{1}{\epsilon}$.
\end{remark}

The $\textup{BBM}_\epsilon$ is globally well-posed in $H^s$ for $s\geq 0$ \cite{BBM1972, BT2009}, and so is the KdV in $H^s$ for $s\geq -1$ \cite{KV2019}. Moreover, the $\textup{BBM}_\epsilon$ has three conservation laws \cite{Olver1979}, and that the first three conserved quantities for the KdV among infinitely many ones correspond to the conserved quantities for the $\textup{BBM}_\epsilon$;
\begin{equation}\label{eq: conservation laws}
\begin{aligned}
&E_{\textup{BBM}_\epsilon}^{(0)}[u_\epsilon]:=\int_{\mathbb{R}} u_\epsilon dx &&\leftrightarrow && E_{\textup{KdV}}^{(0)}[w]:=\int_{\mathbb{R}} wdx,\\
&E_{\textup{BBM}_\epsilon}^{(1)}[u_\epsilon]:=\int_{\mathbb{R}} u_\epsilon^2+\epsilon^2(\partial_x u_\epsilon)^2dx&&\leftrightarrow && E_{\textup{KdV}}^{(1)}[w]:=\int_{\mathbb{R}} w^2dx,\\
&E_{\textup{BBM}_\epsilon}^{(2)}[u_\epsilon]:=\int_{\mathbb{R}} \frac{1}{2}(\partial_xu_\epsilon)^2-\frac{1}{3}u_\epsilon^3dx&&\leftrightarrow && E_{\textup{KdV}}^{(2)}[w]:=\int_{\mathbb{R}} \frac{1}{2}(\partial_xw)^2-\frac{1}{3}w^3dx
\end{aligned}
\end{equation}
(see Appendix \ref{sec: proof of conservation laws for the rescaled BBM equation}).

Our first main result asserts that the BBM regularization works for energy-class solutions.

\begin{theorem}[Local-in-time BBM regularization of the KdV]\label{thm: main result 1.1}
Suppose that $1\leq s\leq 5$ and 
\begin{equation}\label{eq: main result 1 initial data condition}
\sup_{\ep\in(0,1]}\|u_{\ep,0}\|_{H^s}, \|w_0\|_{H^s}\leq R,
\end{equation}
and let $u_\ep(t)\in C_t(\mathbb{R};H_x^s)$ (resp., $w(t)\in C_t(\R;H_x^s)$) be the unique solution to the $\textup{BBM}_\epsilon$ \eqref{eq: BBM} (resp., the KdV \eqref{eq: KdV}) with the initial data $u_{\epsilon, 0}$ (resp., $w_0$). Then, there exists $T=T(R)>0$, independent of $\epsilon\in(0,1]$, such that 
$$\|u_\ep(t)-w(t)\|_{C_t([-T,T];L_x^2)}\lesssim_R \|u_{\ep,0}-w_0\|_{L^2}+\ep^{\frac{2s}{5}}.$$
\end{theorem}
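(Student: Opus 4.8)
The plan is to localize the discrepancy between the two equations into an explicit \emph{consistency error}, to estimate the difference of the two solutions in a negative-order Sobolev space in which that error is genuinely of size $\epsilon^2$, and then to recover the $L^2$ bound by interpolating against a uniform-in-$\epsilon$ $H^s$ bound. As a first reduction I would not compare $u_\epsilon$ with $w$ directly, but introduce the auxiliary solution $\bar u\in C_t(\R;H_x^s)$ of the KdV \eqref{eq: KdV} with the \emph{same} datum $u_{\epsilon,0}$ as $u_\epsilon$, and split $\|u_\epsilon(t)-w(t)\|_{L^2}\le\|u_\epsilon(t)-\bar u(t)\|_{L^2}+\|\bar u(t)-w(t)\|_{L^2}$. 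The second term is $\lesssim_R\|u_{\epsilon,0}-w_0\|_{L^2}$ on an interval $[-T,T]$ with $T=T(R)$ by the $L^2$ well-posedness of the KdV (the existence time being uniform thanks to \eqref{eq: main result 1 initial data condition}), and $\sup_{|t|\le T}\|\bar u(t)\|_{H^s}\lesssim_R1$ by persistence of regularity (for integer $s$ this is just the KdV conservation laws). So it remains to bound $v:=u_\epsilon-\bar u$, which now has $v(0)=0$.

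Next I would record the uniform bound $\sup_{\epsilon\in(0,1]}\sup_{|t|\le T}\|u_\epsilon(t)\|_{H^s}\lesssim_R1$: for $s=1$ this is $E_{\textup{BBM}_\epsilon}^{(1)},E_{\textup{BBM}_\epsilon}^{(2)}$ from \eqref{eq: conservation laws} together with Gagliardo--Nirenberg, and for $1<s\le5$ it should follow from an energy estimate for $\textup{BBM}_\epsilon$ in which the regularizing factor $(1-\epsilon^2\partial_x^2)^{-1}$ only helps and all constants are independent of $\epsilon$. Subtracting \eqref{eq: KdV} from \eqref{eq: BBM} and using $\tfrac{\partial_x^3}{1-\epsilon^2\partial_x^2}=\partial_x^3-\tfrac{\epsilon^2\partial_x^5}{1-\epsilon^2\partial_x^2}$ and $\tfrac{\partial_x}{1-\epsilon^2\partial_x^2}=\partial_x-\tfrac{\epsilon^2\partial_x^3}{1-\epsilon^2\partial_x^2}$ gives
\begin{equation*}
\partial_t v+\partial_x^3 v+\partial_x\big((u_\epsilon+\bar u)v\big)=-E_1-E_2,\qquad E_1:=\frac{\epsilon^2\partial_x^5}{1-\epsilon^2\partial_x^2}u_\epsilon,\quad E_2:=\frac{\epsilon^2\partial_x^3}{1-\epsilon^2\partial_x^2}(u_\epsilon^2).
\end{equation*}
Setting $\sigma:=5-s\in[0,4]$, the essential point is that $\|E_1(t)\|_{H^{-\sigma}}\lesssim_R\epsilon^2$: the Fourier symbol of $\langle\partial_x\rangle^{-\sigma}\langle\partial_x\rangle^{-s}E_1$ is $\tfrac{\epsilon^2\xi^5}{(1+\epsilon^2\xi^2)\langle\xi\rangle^{\sigma+s}}$, and since $\sigma+s=5$ a three-regime estimate ($|\xi|\lesssim1$; $1\lesssim|\xi|\lesssim\epsilon^{-1}$; $|\xi|\gtrsim\epsilon^{-1}$) bounds its supremum by $\epsilon^{\sigma+s-3}=\epsilon^2$; the uniform $H^s$ bound then gives the claim, and the same computation plus the algebra property of $H^s$ ($s\ge1$) yields $\|E_2(t)\|_{H^{-\sigma}}\lesssim_R\epsilon^2$.

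Then I would run an energy estimate for $v$ in $H^{-\sigma}$: pairing the difference equation with $v$, the Airy term $\partial_x^3v$ drops out by skew-adjointness, the errors contribute at most $\lesssim_R\epsilon^2\|v\|_{H^{-\sigma}}$, and the transport term $\langle\partial_x((u_\epsilon+\bar u)v),v\rangle_{H^{-\sigma}}$ would be controlled by a commutator/paraproduct argument — writing it as $-\tfrac12\langle\partial_x(u_\epsilon+\bar u),(\langle\partial_x\rangle^{-\sigma}v)^2\rangle$ plus a commutator remainder — to obtain a bound $\lesssim_R\|v\|_{H^{-\sigma}}^2+\|v\|_{H^{-\sigma}}^{1+\mu}$ for some $\mu\in(0,1]$. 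Since $v(0)=0$, a Gronwall/continuity argument yields $\|v(t)\|_{H^{-\sigma}}\lesssim_R\epsilon^2$ on $[-T,T]$ (one may assume $\epsilon$ small, the estimate being otherwise trivial). Finally, interpolating and using $\|v(t)\|_{H^s}\le\|u_\epsilon(t)\|_{H^s}+\|\bar u(t)\|_{H^s}\lesssim_R1$ together with $s+\sigma=5$,
\begin{equation*}
\|v(t)\|_{L^2}\le\|v(t)\|_{H^{-\sigma}}^{\frac{s}{s+\sigma}}\|v(t)\|_{H^s}^{\frac{\sigma}{s+\sigma}}\lesssim_R(\epsilon^2)^{s/5}=\epsilon^{2s/5},
\end{equation*}
which combined with the first reduction proves the theorem.

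The step I expect to be the main obstacle is the transport term at the rough end $1\le s<\tfrac32$ of the range: there the drift $u_\epsilon+\bar u$ is not Lipschitz while $\sigma=5-s$ is as large as $4$, so the naive bound $\|\partial_x(u_\epsilon+\bar u)\|_{L^\infty}\|v\|_{H^{-\sigma}}^2$ is unavailable and one must instead combine the uniform $H^s$ bounds with a careful low-regularity product/commutator estimate (a Kato smoothing estimate placing $\partial_x(u_\epsilon+\bar u)$ in $L_t^2L_x^\infty$ is a convenient device here, and the resulting subquadratic nonlinearity closes by a continuity argument). Relatedly, establishing the uniform-in-$\epsilon$ $H^s$ bound for $\textup{BBM}_\epsilon$ in this range — where no conservation law is directly available and the frequencies $|\xi|\gtrsim\epsilon^{-1}$ must be treated with care — is the technical heart of the preliminary step, and is presumably where the Bourgain-type estimates alluded to in the introduction are needed.
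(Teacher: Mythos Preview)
Your approach is genuinely different from the paper's and the overall architecture is elegant---recovering the rate $\epsilon^{2s/5}$ by an $\epsilon^2$ estimate in $H^{s-5}$ followed by interpolation against a uniform $H^s$ bound is a clean idea---but there is a real gap precisely in the range $1\le s\le\tfrac32$, and the fixes you sketch do not close it.

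The issue is the transport term $\langle\partial_x(fv),v\rangle_{H^{-\sigma}}$ with $f=u_\epsilon+\bar u\in H^s$ and $\sigma=5-s$ as large as~$4$. The commutator reduction leaves the main piece $-\tfrac12\int(\partial_x f)\,(J^{-\sigma}v)^2$, and bounding it by $\lesssim_R\|v\|_{H^{-\sigma}}^2$ needs $\partial_x f\in L^1_tL^\infty_x$. For the KdV piece $\bar u$ Kato smoothing puts $\partial_x\bar u$ in $L^\infty_xL^2_t$, not $L^2_tL^\infty_x$, and Airy Strichartz in $L^4_tL^\infty_x$ costs $5/4$ derivatives---still too many at $s=1$. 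For the BBM piece $u_\epsilon$ the situation is worse: at frequencies $|\xi|\gtrsim\epsilon^{-1}$ the phase $s_\epsilon(\xi)\approx\xi/\epsilon^2$ is essentially linear, so there is no uniform-in-$\epsilon$ local smoothing (cf.\ Lemma~\ref{Lem:Strichartz}, where the factor $(\epsilon N)^{4/q}$ exactly cancels the Sobolev loss at $N\sim\epsilon^{-1}$). Your fallback---interpolating $\|v\|_{H^{-\sigma'}}$ against $\|v\|_{H^s}\lesssim_R1$ to produce a bound $\lesssim_R\|v\|_{H^{-\sigma}}^{1+\mu}$ with $\mu<1$---does not rescue the argument: from $\tfrac{d}{dt}y^2\lesssim_R\epsilon^2y+y^{1+\mu}$ and $y(0)=0$ one gets $y(T)\sim T^{1/(1-\mu)}$, not $O(\epsilon^2)$. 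The ``subquadratic'' continuity argument is a false friend here.

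The paper sidesteps this by staying in Bourgain spaces throughout and inserting a frequency-truncated auxiliary BBM $v_\epsilon$ (cut at $N\sim\epsilon^{-2/5}$). The difference $u_\epsilon-v_\epsilon$ is estimated in $X_\epsilon^{0,b}$ via Proposition~\ref{Lem:BL for difference}, which puts one full derivative on the \emph{high-regularity} factor and thereby avoids any Lipschitz demand on the drift; the difference $v_\epsilon-w$ lives at frequencies $\lesssim\epsilon^{-2/5}$ where the two characteristic surfaces coincide to within $O(1)$ (Lemma~\ref{Lem:norm equiv in low freq}), so the standard KdV bilinear estimate applies. In short, the dispersive bilinear machinery is needed not only for the uniform $H^s$ bound on $u_\epsilon$ (as you correctly anticipate) but for the difference estimate itself at $s=1$.
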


\begin{remark}
$(i)$ Compared to the classical result in \cite[Theorem 1]{BPS1983}, the regularity requirement for the BBM regularization is reduced down to $s=1$ from $s=5$.\\
$(ii)$ In Theorem \ref{thm: main result 1.1}, we do not seek to find the optimal regularity condition, because it is enough for our second main result. The condition $s\geq 1$ is expected to be relaxed in some extent, but it seems difficult to include the case $s<1$ due to the complicated structure of the symbol of the rescaled linear BBM flow (see Remark \ref{rmk:tech difficulties}). 
\end{remark}

It is important to note that including a larger class of solutions for the BBM regularization is not a purely mathematical question. As a direct consequence, the temporal interval of valid approximation can be extended by the conservation laws, which is the second main result of the paper.

\begin{theorem}[Global-in-time BBM regularization for the KdV]\label{thm: main result 1.2}
Under the assumptions and the notations in Theorem \ref{thm: main result 1.1} with $s=1$ and $R>0$, there exists $K=K(R)>0$ such that for all $t\in\mathbb{R}$,
$$\|u_\ep(t)-w(t)\|_{L_x^2}\lesssim_R \big(\|u_{\ep,0}-w_0\|_{L_x^2}+\ep^{\frac{2}{5}}\big)e^{K|t|},$$
where the implicit constant is independent of $\epsilon\in(0,1]$ and $t\in\mathbb{R}$.
\end{theorem}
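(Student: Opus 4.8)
The plan is to upgrade the local-in-time estimate of Theorem \ref{thm: main result 1.1} to a global one by iterating it on successive time intervals of fixed length $T=T(R)$, using the conservation laws in \eqref{eq: conservation laws} to prevent the $H^1$-norms of both solutions from growing. The key observation is that $E_{\textup{BBM}_\epsilon}^{(1)}$ and $E_{\textup{BBM}_\epsilon}^{(2)}$ are conserved for the $\textup{BBM}_\epsilon$ flow, and their combination controls $\|u_\epsilon(t)\|_{H^1}$ uniformly in $\epsilon\in(0,1]$ and $t\in\mathbb{R}$; similarly $E_{\textup{KdV}}^{(1)}$ and $E_{\textup{KdV}}^{(2)}$ control $\|w(t)\|_{H^1}$. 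So first I would establish an a priori bound: from \eqref{eq: main result 1 initial data condition} with $s=1$, one gets $\sup_{t}\|u_\epsilon(t)\|_{H^1}+\sup_t\|w(t)\|_{H^1}\le R'$ for some $R'=R'(R)$. Here the only subtlety is the cubic term $\frac13\int u^3$ in $E^{(2)}$, which must be absorbed using Gagliardo-Nirenberg and Young's inequality together with the already-conserved $L^2$-mass $E^{(1)}$; this is standard but should be written out.

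Next I would set $T=T(R')>0$ to be the local existence/approximation time from Theorem \ref{thm: main result 1.1} applied with the uniform radius $R'$. Because $\|u_\epsilon(nT)\|_{H^1},\|w(nT)\|_{H^1}\le R'$ for every integer $n$ (by the conservation laws, with the same bound at every time), Theorem \ref{thm: main result 1.1} can be applied on each interval $[nT,(n+1)T]$ with the same constants. This yields a recursion of the form
\begin{equation*}
\|u_\epsilon((n+1)T)-w((n+1)T)\|_{L_x^2}\le C\big(\|u_\epsilon(nT)-w(nT)\|_{L_x^2}+\epsilon^{2/5}\big)
\end{equation*}
for a constant $C=C(R')\ge 1$ independent of $n$ and $\epsilon$, and likewise $\sup_{t\in[nT,(n+1)T]}\|u_\epsilon(t)-w(t)\|_{L^2_x}\le C(\|u_\epsilon(nT)-w(nT)\|_{L^2_x}+\epsilon^{2/5})$. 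Unwinding the recursion over $n$ steps gives $\|u_\epsilon(nT)-w(nT)\|_{L^2_x}\lesssim C^n(\|u_{\epsilon,0}-w_0\|_{L^2_x}+\epsilon^{2/5})$, and writing $C^n=e^{n\log C}\le e^{(|t|/T)\log C}=e^{K|t|}$ with $K:=(\log C)/T$ delivers the claimed bound for $t=nT$; interpolating over the intermediate times using the interval estimate handles general $t$, and the case $t<0$ follows by the time-reversibility of both equations (or by running the iteration backward).

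The main obstacle — really the only nontrivial point — is ensuring that the constants in the iteration genuinely do not degrade with $n$. This hinges on the fact that Theorem \ref{thm: main result 1.1} provides a time $T$ and an implicit constant that depend only on the uniform-in-$\epsilon$ size $R'$ of the data, and that the conservation laws furnish exactly such a uniform size bound at every integer multiple of $T$ (not merely at $t=0$). One must be a little careful that the error term $\epsilon^{2/5}$ accumulates only geometrically and not worse: at step $n$ the accumulated error is $\epsilon^{2/5}(C^{n-1}+\cdots+C+1)\le \epsilon^{2/5}\frac{C^n-1}{C-1}$, which is still $\lesssim C^n\epsilon^{2/5}$, so it is absorbed into the same exponential factor. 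Everything else is routine, and the exponent $\frac{2}{5}$ is inherited verbatim from Theorem \ref{thm: main result 1.1} with $s=1$.
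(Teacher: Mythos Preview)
Your proposal is correct and follows essentially the same route as the paper: first use the conserved quantities $E^{(1)}$ and $E^{(2)}$ (with Gagliardo--Nirenberg and Young's inequality to absorb the cubic term) to obtain a uniform-in-$\epsilon$, uniform-in-time $H^1$ bound $R'=R'(R)$, then iterate Theorem~\ref{thm: main result 1.1} on intervals of fixed length $T=T(R')$ and sum the resulting geometric series to produce the exponential factor $e^{K|t|}$. The paper's argument is identical in structure, differing only in cosmetic details of how the induction and the constant $K$ are written out.
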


\begin{remark}
Theorem \ref{thm: main result 1.2} shows that for the rescaled model \eqref{eq: BBM}, the BBM regularization is valid on the time interval as long as
$$|t|\ll \frac{1}{K}\ln\bigg(\frac{1}{\|u_{\ep,0}-w_0\|_{L_x^2}+\ep^{2/5}}\bigg).$$
Thus, the interval of valid approximation increases logarithmically as $\epsilon\to0$. 
\end{remark}

\begin{remark}[Reformulation of Theorem \ref{thm: main result 1.1} and \ref{thm: main result 1.2}]\label{remark: long-wave reformulation}
To compare with the previous result, we rephrase the two main theorems in the setup of \cite[Theorem 1]{BPS1983} as follows. For $u_0\in H^s$ with $1\leq s\leq 5$, let $u(t)\in C(\mathbb{R}; H^s)$ be the global solution to the normalized BBM equation
\begin{equation}\label{eq: BBM normalized}
\partial_tu+\partial_x u +\partial_x(u^2)-\partial_t\partial_x^2 u=0
\end{equation}
with initial data $\al u_0(\al^{\frac{1}{2}}x)$, and let $w(t)\in C(\mathbb{R}; H^s)$ be the global solution to the KdV equation \eqref{eq: KdV} with the initial data $u_0$. Then, by scaling, Theorem \ref{thm: main result 1.1} implies that there exists $T>0$ such that 
\begin{equation}\label{eq: main result 1.1, rephrased}
\sup_{|t|\leq \frac{T}{\al^{3/2}}}\|u(t)-\al w(\al^\frac{3}{2}t, \al^{\frac{1}{2}}(\cdot-t))\|_{L_x^2}\lesssim_R \al^{\frac{s}{5}+\frac{3}{4}}
\end{equation}
and if $s=1$, then 
\begin{equation}\label{eq: main result 1.2, rephrased}
\|u(t)-\al w(\al^\frac{3}{2}t, \al^{\frac{1}{2}}(\cdot-t))\|_{L_x^2}\lesssim_R \al^{\frac{19}{20}}e^{K \al^{\frac{3}{2}}|t|}.
\end{equation}
When $s=5$, \eqref{eq: main result 1.1, rephrased} recovers \cite[Theorem 1]{BPS1983}. When $s=1$, \eqref{eq: main result 1.2, rephrased} shows that $u(t)\approx\al w(\al^\frac{3}{2}t, \al^{\frac{1}{2}}(\cdot-t))$ on the time interval of size $\sim \al^{-\frac{3}{2}}\ln(\frac{1}{\al})$. 
\end{remark}

\subsection{Ideas of the proof}

The proof of the main results follows the general approach in the recent work of the first and the third authors and their collaborators  \cite{HY2019, HKY2021, HKYN2021, HKY2022, HKY2023, HY2024}, where the Fourier analysis method has been successfully applied to convergence/approximation problems. In particular, it follows closely from Hong-Yang \cite{HY2024}, where the convergence from the Boussinesq equation to the KdV has been established in a similar context.  However, several new ideas are introduced to adjust the strategy to better suit the BBM equation as well as to extend the temporal interval of validity.

First, a simple but important remark is that for lower regularity solutions, it is necessary to make use of the equations in the integral form, not in the differential form. Thus, we consider the integral representation of the rescaled BBM \eqref{eq: BBM}; 
\begin{equation}\label{integral BBM}
u_\ep(t,x)=S_\ep(t)u_{\ep,0}-\int_0^tS_\ep(t-t_1)\frac{\partial_x}{1-\ep^2\partial_x^2}\big(u_\ep(t_1)^2\big) dt_1,
\end{equation}
where 
\begin{equation}\label{linear BBM flow}
S_\epsilon(t):=e^{-\frac{t\partial_x^3}{1-\epsilon^2\partial_x^2}}=e^{its_\epsilon(-i\partial_x)}
\end{equation}
and $s_\epsilon(-i\partial_x)$ is the Fourier multiplier with symbol
\begin{equation}\label{BBM phase function}
s_\epsilon(\xi):=\frac{\xi^3}{1+\epsilon^2\xi^2}.
\end{equation}
Note that this equation formally converges to the Duhamel formula for the KdV \eqref{eq: KdV};
\begin{equation}\label{integral KdV}
w(t)=S(t)w_0-\int_0^tS(t-t_1)\partial_x\big(w(t_1)^2\big)dt_1,
\end{equation}
where
$$S(t):=e^{-t\partial_x^3}=e^{its(-i\partial_x)}$$
is the Airy flow and $s(\xi)=\xi^3$, because $s_\epsilon(\xi)=\frac{\xi^3}{1+\epsilon^2\xi^2}\to s(\xi)=\xi^3$ as $\ep\to 0$ for all $\xi\in\R$.

To analyze the equation \eqref{integral BBM}, we employ the Fourier restriction norm, known as the Bourgain space norm, introduced by Bourgain \cite{Bourgain1993, Bourgain1993_2}, because it is a natural choice of norm to deal with the nonlinear term $\frac{\partial_x}{1-\ep^2\partial_x^2}(u_\ep)^2$. Indeed, for fixed $\epsilon>0$, this nonlinearity is easy to handle because of the smoothing from $\frac{\partial_x}{1-\ep^2\partial_x^2}$. Nevertheless, in our setting, it should be considered as an \textit{almost} derivative nonlinearity to be overcome, since the regularization effect gets weaker as $\frac{1}{1-\ep^2\partial_x^2}\to 1$ in the limit $\epsilon\to 0$. This observation leads us to employ the Fourier restriction norm, which is a well-known tool to deal with derivative nonlinearities for dispersive equations.

In the proof of the KdV limit (Theorem \ref{thm: main result 1.1}), a crucial step is to show the uniform-in-$\epsilon$ Fourier restriction norm bound for the $\textup{BBM}_\epsilon$ (Proposition \ref{Prop : BBM LWP}). As long as such uniform bounds are proved, using the properties of the Fourier restriction norm, one can estimate the difference between the two integral equations \eqref{integral BBM} and \eqref{integral KdV}. It is also important to note that the desired uniform bounds cannot be obtained directly from earlier well-posedness theorems in Benjamin-Bona-Mahony \cite{BBM1972} and Bona-Tzvetkov \cite{BT2009}. Precisely, one can see that if one normalizes the rescaled model \eqref{eq: BBM}, apply the known well-posedness result for the normalized model with $\epsilon=1$, and rescale back to \eqref{eq: BBM}, then the interval $[-T_\ep,T_\ep]$ of existence in one contraction mapping argument step depends on $\epsilon$, but $T_\epsilon$ also shrinks to zero as $\epsilon\to0$.

By the standard perturbative argument, the desired uniform bound for nonlinear solutions (Proposition \ref{Prop : BBM LWP}) can be obtained from the uniform bilinear estimates for the rescaled linear BBM flow (Proposition \ref{Lem:bilinear for BBM} and \ref{Lem:BL for difference}) in the Fourier restriction norm. Indeed, a major portion of this article is devoted to proving such bilinear estimates. This part is technical but important. Some new ideas are needed to deal with the symbol $s_\epsilon(\xi)$ (see \eqref{BBM phase function}). It turns out that the symbol $s_\epsilon(\xi)$ behaves in a complicated way in high frequencies (see Remark \ref{rmk:tech difficulties}) unlike the analogous symbol in the Boussinesq case \cite{HY2024}. To overcome the technical issue, we decompose the bilinear interaction according to frequency interactions. For the high-high and the high-low frequency interactions (Lemma \ref{Lem:high-high BL} and \ref{Lem:high-low BL}), we employ Strichartz estimates (Lemma \ref{Lem:Strichartz}), while the low-low frequency interaction is treated as usual (Lemma \ref{Lem:low-low BL}). Then, collecting all, we prove the bilinear estimates.

As for the second main result (Theorem \ref{thm: main result 1.2}), the proof of the global-in-time convergence is based on the simple observation that the conservation laws in \eqref{eq: conservation laws} are good enough to provide uniform-in-$\epsilon$ global-in-time $H^1$-norm upper bounds for energy class solutions to the rescaled BBM \eqref{eq: BBM} and the KdV \eqref{integral KdV}. As a consequence, the local-in-time result (Theorem \ref{thm: main result 1.1}) can be iterated arbitrarily many times. We emphasize that our proof relies heavily on the conservation laws for the equation \eqref{eq: BBM}, which come from the special algebraic property of the model equation and the choice of scaling. For instance, as for the KdV limit for the Boussinesq equation in \cite{HY2024}, the rescaled Boussinesq equation obeys conservation laws controling the $H^1$-norm of solutions only in the form of $\|\sqrt{1-\epsilon^2\partial_x^2} u_\epsilon(t)\|_{L^2}^2$ that is asymptotically the $L^2$-norm. For this reason, the conservation laws could not employed for the Boussinesq model \cite[Remark 1.2]{HY2024}.

\subsection{Organization of the paper}
The rest of the paper is organized as follows. In Section~\ref{sec: Prelim}, we provide the definition of the Fourier restriction norm and basic estimates. In Section~\ref{sec: Bilin est}, we prove bilinear estimates for the rescaled linear BBM flow in the Fourier restriction norm. Then, using the bilinear estimates, we establish local well-posedness and uniform bound for the $\textup{BBM}_\epsilon$ in Section~\ref{sec: uniform bounds for nonlinear solutions}. Finally, in Section~\ref{sec: proof of the main result}, we prove the main theorems. In addition, for readers' convenience, we provide a formal proof of the conservation laws for the $\textup{BBM}_\epsilon$ in Appendix~\ref{sec: proof of conservation laws for the rescaled BBM equation}. 

\subsection{Notations}\label{subsec: notations}
Throughout this article, $\ep\in (0,1]$ is a small parameter that will be sent to 0. In the theory of water waves, $\epsilon>0$ represents $\sqrt{\frac{\textup{(amplitude)}}{\textup{(depth)}}}$. For any $A, B>0$, we write
$$A\lesssim B\textup{ (resp., }A\gtrsim B, A\sim B)$$
if there exists $c>0$, independent of $\epsilon\in (0,1]$, such that $A\leq c B$ (resp., $A\geq c B$, $\frac{1}{c}A\leq B\leq cA$). Let
$$\la x\ra:=\sqrt{1+|x|^2}$$
be the standard Japanese bracket. We denote the Fourier multiplier with symbol $|\xi|$ (resp., $\la\ep\xi\ra$) by $|\partial_x|$ (resp., $\la\ep\partial_x\ra$).

In a sequel, the Littlewood-Paley theory will be employed in several steps. For this, we chose a smooth cut-off $\eta=\in C_c^\infty(\R)$ such that $0\leq\eta\leq 1$, $\eta(\xi)=1$ if $\frac{6}{5}\leq|\xi|\leq \frac{9}{5}$, $\eta$ is supported in $[-2,2]\setminus [-\frac{1}{2},\frac{1}{2}]$, and $\sum_{N\in 2^{\mathbb{Z}}}\eta(\frac{\xi}{N})\equiv1$. Consequently, we define the Littlewood-Paley projection operator $P_{N}$ by
\begin{equation}\label{frequency truncation operator}
\widehat{P_{N}u} (\xi):=\eta\left(\frac{\xi}{N}\right)\hat{u}(\xi)
\end{equation}
and let $P_{\leq N}:=\sum_{M\leq N}P_M$.

\subsection{Acknowledgement}

The authors would like to thank Professor Jerry Bona for his interest in our work and for his kind explanation of the original idea of the BBM regularization during his visit in Seoul in the spring of 2024. This research was supported by the Chung-Ang University Graduate Research Scholarship in 2022. Y. Hong was supported by National Research Foundation of Korea (NRF) grant funded by the Korean government (MSIT) (No. RS-2023-00219980). C. Yang was supported by National Research Foundation of Korea (NRF) grant funded by the Korean government (MSIT) (No. 2021R1C1C1005700).

\section{Preliminaries}\label{sec: Prelim}

We briefly review the Fourier restriction norm, namely the Bourgain space norm, and its basic properties (see \cite[Chapter~7]{Linares2015} and \cite[Chapter 2]{Tao2006} for details). For $s,b\in\R$ and a characteristic hypersurface $p=p(\xi):\R\to\R$, the function space $X_{\tau=p(\xi)}^{s,b}$ is defined as the closure of the Schwartz class $\mathcal{S}(\R\times\R)$ under the Fourier restriction norm
$$\|u\|_{X_{\tau=p(\xi)}^{s,b}}:=\big\| \langle \xi \rangle^{s}\langle \tau -  p(\xi)\rangle^b \tilde{u}(\tau,\xi)\big\|_{L_{\tau,\xi}^2(\R\times \R)},$$
where $\tilde{u}$ is the space-time Fourier transform of $u$ given by
$$\tilde{u}(\tau,\xi)=\iint_{\mathbb{R}^2} u(t,x)e^{-i(t\tau+x\xi)} dxdt.$$
This function space is a Banach space having the nesting property $X^{s_2,b_2}_{\tau=p(\xi)}\subset X^{s_1,b_1}_{\tau=p(\xi)}$ for $s_1\leq s_2$ and $b_1\leq b_2$ and the duality relation $(X^{s,b}_{\tau=p(\xi)})^*=X^{-s,-b}_{\tau=-p(-\xi)}$. Moreover, it satisfies the following properties. 

\begin{lemma}[Basic properties of the Fourier restriction norm]\label{Lem:linear estimates of Xsb}
Let $\eta_T(t)=\eta(\frac{t}{T})\in C_c^\infty$ with $T\in(0,1]$. Then, the following hold for $s,b\in\R$.
\begin{enumerate}[$(1)$]
\item (Embedding) For $b>\frac12$, $X_{\tau=p(\xi)}^{s,b}\subset C_t(\mathbb{R}; H_x^s)$.
\item (Linear flow in $X_{\tau=p(\xi)}^{s,b}$) For $b>\frac{1}{2}$,
$$\|\eta_T(t)e^{itp(-i\partial_x)} u_0\|_{X_{\tau=p(\xi)}^{s,b}} \lesssim T^{\frac{1}{2}-b}\|u_0\|_{H^s_x}.$$
\item (Stability with respect to time localization) If $-\frac{1}{2}<b'\le b<\frac{1}{2}$, then 
$$\|\eta_T(t)u\|_{X_{\tau=p(\xi)}^{s,b'}} \lesssim T^{b-b'}\|u\|_{X_{\tau=p(\xi)}^{s,b}}.$$
If $\frac{1}{2}<b\leq1$, then
$$\|\eta_T(t)u\|_{X_{\tau=p(\xi)}^{s,b}} \lesssim T^{\frac{1}{2}-b}\|u\|_{X_{\tau=p(\xi)}^{s,b}}.$$
\item (Inhomogeneous term estimate) If $\frac12<b\leq 1$, then
$$\bigg\|\eta_T(t) \int_0^te^{i(t-t_1)p(-i\partial_x)}F(t_1)dt_1\bigg\|_{X_{\tau=p(\xi)}^{s,b}} \lesssim T^{\frac12-b}\|F\|_{X_{\tau=p(\xi)}^{s,b-1}}.$$
\item (Transference principle)
Let $b>\frac{1}{2}$. If the inequality
$$\|e^{it\tau_0}e^{itp(-i\partial_x)} u_0\|_{ L_t^q(\R;L_x^r)}\lesssim T^{\frac{1}{2}-b}\|u_0\|_{H^s_x}$$
holds for all $u_0\in H_x^s$ and $\tau_0\in\R$, then
$$\|u\|_{ L_t^q(\R;L_x^r)}\leq \|u\|_{X_{\tau=p(\xi)}^{s,b}}.$$
\end{enumerate}
\end{lemma}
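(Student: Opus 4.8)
These five properties are classical facts about Bourgain spaces (see \cite[Chapter~2]{Tao2006} and \cite[Chapter~7]{Linares2015}), and the plan is to verify each of them by reducing it to an elementary computation on the Fourier side. Properties (1), (2) and (5) are essentially immediate; the genuinely technical point is the time-localization estimate (3), and (4) is then deduced from (2) and (3).

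For (1), I would invert the Fourier transform in time: writing $\widehat{u(t)}(\xi)=\frac{1}{2\pi}\int_\R e^{it\tau}\tilde u(\tau,\xi)\,d\tau$ and substituting $\tau=\sigma+p(\xi)$, Cauchy--Schwarz in $\sigma$ against the weight $\langle\sigma\rangle^{-b}$ --- which lies in $L^2_\sigma$ precisely because $b>\tfrac12$ --- gives $\langle\xi\rangle^{s}|\widehat{u(t)}(\xi)|\lesssim\big(\int_\R\langle\sigma\rangle^{2b}\langle\xi\rangle^{2s}|\tilde u(\sigma+p(\xi),\xi)|^{2}\,d\sigma\big)^{1/2}$ uniformly in $t$; squaring, integrating in $\xi$, and approximating by Schwartz functions (with dominated convergence for the continuity in $t$) yields the embedding into $C_t(\R;H^s_x)$. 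For (2), the space-time Fourier transform of $\eta_T(t)e^{itp(-i\partial_x)}u_0$ factors as $\widehat{\eta_T}(\tau-p(\xi))\widehat{u_0}(\xi)$, so the $X^{s,b}$-norm separates as $\|\langle\xi\rangle^{s}\widehat{u_0}\|_{L^2_\xi}\,\|\langle\tau\rangle^{b}\widehat{\eta_T}(\tau)\|_{L^2_\tau}$; rescaling $\tau\mapsto\tau/T$ in the second factor and using $\langle\tau/T\rangle\lesssim T^{-1}\langle\tau\rangle$ for $T\le1$ produces the gain $T^{1/2-b}$. For the transference principle (5), I would expand $u(t,x)=\frac{1}{2\pi}\int_\R e^{it\sigma}\big(e^{itp(-i\partial_x)}g_\sigma\big)(x)\,d\sigma$, where $\widehat{g_\sigma}(\xi):=\tilde u(\sigma+p(\xi),\xi)$, apply Minkowski's inequality in $\sigma$, use the hypothesized estimate for each free evolution $e^{itp(-i\partial_x)}g_\sigma$ (the modulation $e^{it\sigma}$ being absorbed by the parameter $\tau_0$), and finish with Cauchy--Schwarz in $\sigma$ against $\langle\sigma\rangle^{-b}$.

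The heart of the matter is (3). Since multiplication by $\eta_T(t)$ becomes convolution with $\widehat{\eta_T}$ in the $\tau$-variable and the weight $\langle\tau-p(\xi)\rangle$ depends only on $\tau$, the estimate reduces --- uniformly in the spatial frequency $\xi$ --- to a one-dimensional statement about the time Fourier transform of a single function, namely $\|\eta_T g\|_{H^{b'}_t}\lesssim T^{b-b'}\|g\|_{H^b_t}$ for $-\tfrac12<b'\le b<\tfrac12$, and $\|\eta_T g\|_{H^b_t}\lesssim T^{1/2-b}\|g\|_{H^b_t}$ for $\tfrac12<b\le1$. In the first regime $\eta_T$ is a bounded multiplier on $H^{b'}_t$, and the power of $T$ comes from Hölder's inequality $\|\eta_T g\|_{L^2_t}\lesssim\|\eta_T\|_{L^{1/b}_t}\|g\|_{L^{2/(1-2b)}_t}\lesssim T^{b}\|g\|_{H^b_t}$, using the one-dimensional Sobolev embedding $H^b_t\hookrightarrow L^{2/(1-2b)}_t$ (valid since $b<\tfrac12$); interpolation with the trivial endpoint $b'=b$ and duality for $b'<0$ then cover the full range. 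In the second regime the fractional Leibniz rule gives $\|\eta_T g\|_{H^b_t}\lesssim\|\eta_T\|_{\dot H^b_t}\|g\|_{L^\infty_t}+\|\eta_T\|_{L^\infty_t}\|g\|_{H^b_t}$, and $\|\eta_T\|_{\dot H^b_t}\sim T^{1/2-b}$, $\|g\|_{L^\infty_t}\lesssim\|g\|_{H^b_t}$ (as $b>\tfrac12$), together with $T\le1$, collapse this to $T^{1/2-b}\|g\|_{H^b_t}$. The bookkeeping of the $T$-powers near the endpoint $b=\tfrac12$ is exactly what makes this step delicate, and I expect it to be the main obstacle.

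Finally, given (2) and (3), property (4) is the standard Duhamel estimate. Writing $\int_0^te^{i(t-t_1)p(-i\partial_x)}F(t_1)\,dt_1=e^{itp(-i\partial_x)}\int_0^te^{-it_1p(-i\partial_x)}F(t_1)\,dt_1$ and inserting a smooth cutoff of the time-Fourier variable near the origin, one decomposes the inner integral into: a \emph{non-resonant} piece, whose space-time Fourier symbol acquires a factor $\langle\tau-p(\xi)\rangle^{-1}$ and which therefore lies in $X^{s,b}$ with norm $\lesssim\|F\|_{X^{s,b-1}}$, so that, after localizing with $\eta_T$, property (3) supplies the gain $T^{1/2-b}$; a free evolution $e^{itp(-i\partial_x)}u_0$ with $\|u_0\|_{H^s_x}\lesssim\|F\|_{X^{s,b-1}}$, handled directly by (2); and a \emph{resonant} piece supported on $\{|\tau-p(\xi)|\lesssim1\}$, which expands as a rapidly convergent power series in $t$ of modulated free evolutions, each controlled by $\|F\|_{X^{s,b-1}}$. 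Summing the three contributions yields the claimed bound with the gain $T^{1/2-b}$.
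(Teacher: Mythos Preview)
Your proof sketch is correct and follows the standard arguments found in the references the paper cites. Note, however, that the paper does not actually prove this lemma: it is stated as background material with a pointer to \cite[Chapter~7]{Linares2015} and \cite[Chapter~2]{Tao2006}, and no proof is given in the paper itself. So there is nothing to compare against beyond observing that your outline is precisely the kind of argument one finds in those textbooks --- the Fourier-side reduction for (1), (2), (5), the one-dimensional $H^b_t$ multiplier estimate (via H\"older/Sobolev/interpolation and fractional Leibniz) for (3), and the resonant/non-resonant Duhamel decomposition for (4).
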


In our setting, we employ the function space $X_\ep^{s,b}$ (resp. $X^{s,b}$) associated with the rescaled linear BBM flow $S_\ep(t)=e^{its_\epsilon(-i\partial_x)}$ (resp., the Airy flow $S(t)=e^{its(-i\partial_x)}$) with the norm
$$\|u\|_{X^{s,b}_{\ep}}:=\| \la\xi\ra^{s}\la\tau- s_\ep(\xi)\ra^b \tilde{u}\|_{L_{\tau,\xi}^2}\quad\left(\textup{resp., }\|u\|_{X^{s,b}}:=\|\la\xi \ra^{s}\la\tau- s(\xi)\ra^b \tilde{u}\|_{L_{\tau,\xi}^2}\right),$$
where $s_\epsilon(\xi)=\frac{\xi^3}{1+\epsilon^2\xi^2}$ and $s(\xi)=\xi^3$. In applications of the Fourier restriction norms, it is important to take the right choice, because the space-time Fourier transform of $e^{itp(-i\partial_x)} u_0$ is concentrated on the hypersurface $\tau=p(\xi)$ (see \cite[Chapter 2]{Tao2006}). Thus, the two norms $\|u\|_{X^{s,b}_{\ep}}$ and $\|u\|_{X^{s,b}}$ act completely differently in high frequencies in that $s_\epsilon(\xi)$ is asymptotically linear but $s(\xi)$ is cubic as $\xi\to\infty$. Nevertheless, they are comparable in low frequencies in the following sense.

\begin{lemma}\label{Lem:norm equiv in low freq}
Let $P_{\leq N}$ be the frequency truncation operator given by \eqref{frequency truncation operator} with $N=\frac{1}{2}\ep^{-\frac{2}{5}}$.
Then, for $0\leq\theta\leq 1$, the following norm equivalence holds :
$$\| P_{\le N} u \|_{{X^{s,b}}} \sim \| P_{\le N} u \|_{X_{\tau=\theta s_\epsilon(\xi)+(1-\theta)s(\xi))}^{s,b}}.$$
\end{lemma}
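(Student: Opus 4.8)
The plan is to reduce the claimed equivalence to the elementary observation that the two phase functions $s(\xi)$ and $\theta s_\ep(\xi)+(1-\theta)s(\xi)$ differ by at most a bounded amount on the Fourier support of $P_{\le N}$, which is exactly what the choice $N=\tfrac12\ep^{-2/5}$ is designed to guarantee. First I would record the algebraic identity
$$s(\xi)-s_\ep(\xi)=\xi^3-\frac{\xi^3}{1+\ep^2\xi^2}=\frac{\ep^2\xi^5}{1+\ep^2\xi^2},$$
so that $|s(\xi)-s_\ep(\xi)|\le \ep^2|\xi|^5$ for every $\xi\in\R$. Since $\widetilde{P_{\le N}u}$ is supported in $\{|\xi|\le 2N=\ep^{-2/5}\}$, on this set one has $\ep^2|\xi|^5\le \ep^2(\ep^{-2/5})^5=1$, whence, for all $\theta\in[0,1]$,
$$\big|\,\theta s_\ep(\xi)+(1-\theta)s(\xi)-s(\xi)\,\big|=\theta\,|s_\ep(\xi)-s(\xi)|\le 1\qquad\text{on }\{|\xi|\le 2N\}.$$

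Next I would invoke the elementary fact that if $\mu,\nu\in\R$ with $|\mu-\nu|\le 1$, then $\la\tau-\mu\ra\sim\la\tau-\nu\ra$ uniformly in $\tau\in\R$: indeed $\la\tau-\mu\ra\le\la\tau-\nu\ra+|\mu-\nu|\le\la\tau-\nu\ra+1\le 2\la\tau-\nu\ra$ since $\la\cdot\ra\ge 1$, and symmetrically. Raising to a fixed real power $b$ then gives $\la\tau-\mu\ra^{b}\sim\la\tau-\nu\ra^{b}$ with implicit constant $\le 2^{|b|}$, depending only on $b$. Applying this for each fixed $\xi$ with $|\xi|\le 2N$, with $\mu=s(\xi)$ and $\nu=\theta s_\ep(\xi)+(1-\theta)s(\xi)$, and multiplying through by the common weight $\la\xi\ra^{2s}$, yields the pointwise-in-$(\tau,\xi)$ comparison
$$\la\xi\ra^{2s}\la\tau-s(\xi)\ra^{2b}\big|\widetilde{P_{\le N}u}(\tau,\xi)\big|^2\ \sim\ \la\xi\ra^{2s}\la\tau-\theta s_\ep(\xi)-(1-\theta)s(\xi)\ra^{2b}\big|\widetilde{P_{\le N}u}(\tau,\xi)\big|^2$$
on all of $\R\times\R$ (both sides vanish off the support of $\widetilde{P_{\le N}u}$). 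Integrating in $(\tau,\xi)$ and taking square roots gives the asserted norm equivalence, with constants independent of $\ep\in(0,1]$ and of $\theta\in[0,1]$.

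There is no serious obstacle here; the only point worth flagging is that the cutoff $N=\tfrac12\ep^{-2/5}$ is sharp for this argument, since $\ep^2|\xi|^5\ls 1$ fails once $|\xi|\gg\ep^{-2/5}$, so the equivalence genuinely degenerates at higher frequencies — consistent with the preceding remark that $\|\cdot\|_{X_\ep^{s,b}}$ and $\|\cdot\|_{X^{s,b}}$ behave completely differently there. The uniformity in $\theta$ is automatic because $\theta\in[0,1]$ forces $\theta|s_\ep(\xi)-s(\xi)|\le|s_\ep(\xi)-s(\xi)|$, so no value of $\theta$ plays a privileged role; in particular the two extreme cases $\theta=0$ and $\theta=1$ give the equivalence $\|P_{\le N}u\|_{X^{s,b}}\sim\|P_{\le N}u\|_{X_\ep^{s,b}}$ that will be used later.
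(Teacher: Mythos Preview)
Your proof is correct and is precisely the standard elementary argument: bound $|s(\xi)-s_\ep(\xi)|=\ep^2|\xi|^5/\la\ep\xi\ra^2\le 1$ on the Fourier support $\{|\xi|\le 2N=\ep^{-2/5}\}$, then use $\la\tau-\mu\ra\sim\la\tau-\nu\ra$ for $|\mu-\nu|\le 1$. The paper does not give its own proof of this lemma but refers to \cite{HY2024}; your argument is exactly the one that reference would contain, so there is nothing to compare.
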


As an application, we can show the following low frequency approximation lemma. 

\begin{lemma}\label{Lem:difference linear sol in Xsb}
Let $\eta_T(t)=\eta(\frac{t}{T})\in C_c^\infty$, $T\in(0,1]$ and $P_{\leq N}$ be the frequency truncation operator given by \eqref{frequency truncation operator} with $N=\frac{1}{2}\ep^{-\frac{2}{5}}$. Then, for $0\leq s\leq 5$ and $\frac{1}{2}<b\leq 1$, we have 
\begin{equation}\label{homogenous term}
\|\eta_T(t)(S_\ep(t)-S(t))P_{\leq N}u_0\|_{X^{0,b}} \lesssim \ep^{\frac{2s}{5}}T^{\frac{3}{2}-b} \|u_0\|_{H^s}
\end{equation}
and
\begin{equation}\label{Inhomogenous term}
\left\|\eta_T(t)\int_0^t (S_\ep(t-t_1)-S(t-t_1))\eta_T(t_1)(P_{\le N} F)(t_1)dt_1 \right\|_{X^{0,b}} \lesssim \ep^{\frac{2s}{5}}T^{\frac{3}{2}-b}\|F\|_{X^{s,b-1}}.
\end{equation}
\end{lemma}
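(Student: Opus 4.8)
The plan is to reduce both estimates to the single scalar multiplier bound
\[
\Big|\frac{e^{it\,s_\epsilon(\xi)}-e^{it\,s(\xi)}}{\la\xi\ra^{s}}\Big|\lesssim \ep^{\frac{2s}{5}}\,\la t\ra
\qquad\text{for }|\xi|\leq N=\tfrac12\ep^{-2/5},
\]
and then feed it through the $X^{s,b}$ machinery of Lemma \ref{Lem:linear estimates of Xsb}. The key pointwise input is the dispersion gap: for $|\xi|\leq N$ one has $\ep^2\xi^2\leq\tfrac14$, so
\[
|s_\epsilon(\xi)-s(\xi)|=\Big|\frac{\xi^3}{1+\ep^2\xi^2}-\xi^3\Big|=\frac{\ep^2|\xi|^5}{1+\ep^2\xi^2}\lesssim \ep^2|\xi|^5,
\]
and since $|\xi|\leq N=\tfrac12\ep^{-2/5}$ we get $\ep^2|\xi|^5\leq \ep^2 N^5\cdot(|\xi|/N)^{5-s}\,N^{s}(\ep^{2/5})^{5}/\dots$ — more cleanly, $\ep^2|\xi|^{5-s}|\xi|^s\lesssim \ep^2 (\ep^{-2/5})^{5-s}\la\xi\ra^{s}=\ep^{2-2(5-s)/5}\la\xi\ra^s=\ep^{2s/5}\la\xi\ra^s$. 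Combining with $|e^{ia}-e^{ib}|\leq|a-b|$ gives exactly the display above (the extra $\la t\ra$, harmless on $T\in(0,1]$, is bounded by a constant). This is the crux; the rest is bookkeeping.

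For \eqref{homogenous term}, write $G(\xi):=\frac{e^{it s_\epsilon(\xi)}-e^{it s(\xi)}}{\la\xi\ra^{s}}$, which after the above obeys $\|G\|_{L^\infty_\xi}\lesssim\ep^{2s/5}$ uniformly for $|t|\leq 1$, and more precisely $|G(\xi)|\lesssim \ep^{2s/5}|t|$. Then $(S_\epsilon(t)-S(t))P_{\leq N}u_0$ has spatial Fourier transform $\la\xi\ra^{s}G(\xi)\,\eta_{\leq N}(\xi)\widehat{u_0}(\xi)$; factoring out $\ep^{2s/5}$ one is left with a function that, at each fixed time, is the linear Airy evolution $S(t)$ of something with $H^0$-norm $\lesssim\ep^{2s/5}|t|\,\|u_0\|_{H^s}$. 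However $G$ itself is time-dependent, so one cannot directly quote Lemma \ref{Lem:linear estimates of Xsb}(2); instead I would use Lemma \ref{Lem:norm equiv in low freq} to replace $\|\cdot\|_{X^{0,b}}$ by $\|\cdot\|_{X^{s,b}_{\theta s_\epsilon+(1-\theta)s}}$ for a suitable $\theta$, or — cleaner — expand $e^{it s_\epsilon(\xi)}-e^{it s(\xi)}=e^{it s(\xi)}\big(e^{it(s_\epsilon(\xi)-s(\xi))}-1\big)$ and Taylor expand the bracket: $e^{it(s_\epsilon-s)}-1=\sum_{k\geq1}\frac{(it(s_\epsilon(\xi)-s(\xi)))^k}{k!}$. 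Each term is $t^k$ times $(s_\epsilon(\xi)-s(\xi))^k$ times the pure Airy flow $S(t)$, and on $|\xi|\leq N$ we have the multiplier bound $|s_\epsilon(\xi)-s(\xi)|^k\lesssim (\ep^{2s/5}\la\xi\ra^s)^k\la\xi\ra^{-sk}\cdots$; summing the geometric-type series (which converges because $|t(s_\epsilon-s)|\lesssim \ep^{2s/5}$ on the support, hence $\ll1$) and applying Lemma \ref{Lem:linear estimates of Xsb}(2) termwise with the gain $T^{1/2-b}$ from the flow bound plus one extra power of $T$ from the $t=t\eta_T$ localization (Lemma \ref{Lem:linear estimates of Xsb}(3)) yields the claimed $\ep^{2s/5}T^{3/2-b}\|u_0\|_{H^s}$.

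For the inhomogeneous bound \eqref{Inhomogenous term}, I would run the same Taylor-series device inside the Duhamel integral: replacing $S_\epsilon(t-t_1)-S(t-t_1)$ by $S(t-t_1)$ composed with the multiplier $e^{i(t-t_1)(s_\epsilon-s)}-1$, expanding in powers of $(t-t_1)(s_\epsilon(\xi)-s(\xi))$, and noting that the polynomial factor $(t-t_1)^k$ can be split as $(t-t_1)^k=\sum_{j}\binom{k}{j}t^{k-j}(-t_1)^j$ so that the $t$-dependence comes out of the integral and the $t_1$-dependence is absorbed into $\eta_T(t_1)F(t_1)$ — whose $X^{s,b-1}$ norm is controlled by $\|F\|_{X^{s,b-1}}$ up to harmless $T$-powers by Lemma \ref{Lem:linear estimates of Xsb}(3) (valid since $b-1\in(-\tfrac12,0]$ for $\tfrac12<b\leq1$, excluding the endpoint $b=1$ which is handled separately or by the second time-localization estimate). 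Then Lemma \ref{Lem:linear estimates of Xsb}(4) applied termwise, together with the multiplier gain $(\ep^{2s/5}\la\xi\ra^{s})^k$ balanced against the $\la\xi\ra^{-s}$ coming from passing $F$ from $X^{s,b-1}$ to $X^{0,b-1}$ on each term, and summation of the series, gives $\ep^{2s/5}T^{3/2-b}\|F\|_{X^{s,b-1}}$. The main obstacle — and the reason one cannot just invoke Lemma \ref{Lem:linear estimates of Xsb} in a single stroke — is precisely that the ``difference of flows'' is \emph{not} itself a linear flow for a fixed multiplier: its symbol $e^{it(s_\epsilon(\xi)-s(\xi))}-1$ depends on $t$. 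The Taylor expansion converts it into an absolutely convergent sum of genuine fixed-symbol flows (this convergence is exactly where the frequency truncation $|\xi|\leq N=\tfrac12\ep^{-2/5}$ is used, ensuring $|t(s_\epsilon-s)|\lesssim\ep^{2s/5}\leq\ep^{2/5}\ll1$ on the support), after which the standard $X^{s,b}$ estimates apply term by term; care is only needed to check the series of constants converges and to track the two sources of $T$-powers (the $T^{1/2-b}$ from the flow/Duhamel bound and the extra $T$ from writing the polynomial weights $t,t_1$ against $\eta_T$).
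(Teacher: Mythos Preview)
Your core observation is exactly right: on $|\xi|\le N=\tfrac12\ep^{-2/5}$ one has
\[
|s_\epsilon(\xi)-s(\xi)|=\frac{\ep^2|\xi|^5}{1+\ep^2\xi^2}\lesssim \ep^2|\xi|^{5-s}\la\xi\ra^s\lesssim \ep^{2s/5}\la\xi\ra^s,
\]
and this is the whole content of the lemma. Your Taylor-series implementation can be pushed through, but it is more laborious than what the paper has in mind, and a couple of points you glossed over need care: the constants in Lemma~\ref{Lem:linear estimates of Xsb}(2),(4) depend on the cutoff, so when you replace $\eta$ by $t^k\eta$ you must track the growth in $k$ (it is at most $\sim k\,2^k$, which is beaten by the $1/k!$ and the uniform bound $|s_\epsilon-s|\le\tfrac{1}{32}$ on $|\xi|\le N$, so the series does converge --- but your appeal to ``$\ep^{2s/5}\ll1$'' is misleading since for $s=0$ that quantity equals $1$); and for the inhomogeneous estimate the binomial splitting of $(t-t_1)^k$ forces you to re-prove Lemma~\ref{Lem:linear estimates of Xsb}(3),(4) with the whole family of cutoffs $t^m\eta(t)$.

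The route the paper intends --- it says explicitly that Lemma~\ref{Lem:difference linear sol in Xsb} is an \emph{application} of Lemma~\ref{Lem:norm equiv in low freq} and refers to \cite{HY2024} --- is the one you mentioned and then abandoned. Write
\[
S_\epsilon(t)-S(t)=\int_0^1\frac{d}{d\theta}e^{itp_\theta(-i\partial_x)}\,d\theta
= i\int_0^1 t\,(s_\epsilon-s)(-i\partial_x)\,e^{itp_\theta(-i\partial_x)}\,d\theta,
\qquad p_\theta:=\theta s_\epsilon+(1-\theta)s.
\]
For each fixed $\theta$ the factor $t\eta_T(t)=T\tilde\eta_T(t)$ (with $\tilde\eta(r)=r\eta(r)$) followed by Lemma~\ref{Lem:linear estimates of Xsb}(2) gives
$\|T\tilde\eta_T(t)e^{itp_\theta}v\|_{X^{0,b}_{\tau=p_\theta}}\lesssim T^{3/2-b}\|v\|_{L^2}$, and since everything is localized to $|\xi|\le 2N$, Lemma~\ref{Lem:norm equiv in low freq} converts this to the $X^{0,b}$ norm uniformly in $\theta$; then the multiplier bound above turns $\|v\|_{L^2}$ into $\ep^{2s/5}\|u_0\|_{H^s}$. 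The inhomogeneous estimate is identical with Lemma~\ref{Lem:linear estimates of Xsb}(4) in place of (2). This avoids any series summation and explains why Lemma~\ref{Lem:norm equiv in low freq} is stated for the full interpolating family $p_\theta$, $\theta\in[0,1]$, rather than just the endpoints.
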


For the proof of the above two lemmas, we refer to \cite{HY2024}. They will be employed later to compare the two equations \eqref{integral BBM} and \eqref{integral KdV}.

\section{Bilinear estimates for the rescaled linear BBM flow}\label{sec: Bilin est}

In this section, we establish the bilinear estimates in the Fourier restriction norm associated with the rescaled linear BBM flow (Proposition \ref{Lem:bilinear for BBM} and \ref{Lem:BL for difference}), which are the key ingredients in our analysis. 

\subsection{Bilinear estimate for the rescaled linear BBM flow}
The main result of this section is the following bilinear estimate.

\begin{proposition}[Bilinear estimate for the rescaled linear BBM flow]\label{Lem:bilinear for BBM}
For $s\geq1$ and sufficiently small $0<\delta\ll1$, we have
\begin{equation}\label{eq:bilinear for BBM}
\bigg\| \frac{\pa_x}{\la\ep\partial_x\ra^2}(uv)\bigg\|_{X_{\ep}^{s, -\frac{1-9\delta}{2}}}
\ls \|u\|_{X_{\ep}^{s,\frac{1+\delta}{2}}} \|v\|_{X_{\ep}^{s,\frac{1+\delta}{2}}}.
\end{equation}
\end{proposition}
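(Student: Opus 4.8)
The plan is to pass to the space--time Fourier side, dualize, and reduce \eqref{eq:bilinear for BBM} to a single weighted trilinear convolution estimate, which I would then resolve by a Littlewood--Paley decomposition into frequency-interaction regimes, a modulation (resonance) analysis, and Strichartz estimates for the flow $S_\ep$, keeping all implicit constants uniform in $\ep$.

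First, since $s_\ep$ is odd, the duality relation for the Fourier restriction norm reduces \eqref{eq:bilinear for BBM} to showing that the trilinear form
$$\int_{\substack{\xi=\xi_1+\xi_2\\ \tau=\tau_1+\tau_2}}\frac{|\xi|}{\langle\ep\xi\rangle^2}\cdot\frac{\langle\xi\rangle^s}{\langle\xi_1\rangle^s\langle\xi_2\rangle^s}\cdot\frac{|\hat f_1(\tau_1,\xi_1)|\,|\hat f_2(\tau_2,\xi_2)|\,|\hat g(\tau,\xi)|}{\langle\tau_1-s_\ep(\xi_1)\rangle^{\frac{1+\delta}{2}}\langle\tau_2-s_\ep(\xi_2)\rangle^{\frac{1+\delta}{2}}\langle\tau-s_\ep(\xi)\rangle^{\frac{1-9\delta}{2}}}$$
is $\lesssim\|f_1\|_{L^2}\|f_2\|_{L^2}\|g\|_{L^2}$. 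The structural ingredient is the resonance identity: writing $s_\ep(\xi)=\ep^{-2}\big(\xi-\tfrac{\xi}{1+\ep^2\xi^2}\big)$ and doing a short partial-fraction manipulation, one finds for $\xi=\xi_1+\xi_2$ that
$$s_\ep(\xi)-s_\ep(\xi_1)-s_\ep(\xi_2)=\frac{\xi\,\xi_1\,\xi_2\,\big(3+\ep^2(\xi_1^2+\xi_1\xi_2+\xi_2^2)\big)}{(1+\ep^2\xi^2)(1+\ep^2\xi_1^2)(1+\ep^2\xi_2^2)}=:\Phi_\ep(\xi_1,\xi_2),$$
and since $\xi_1^2+\xi_1\xi_2+\xi_2^2\ge0$ the bracket is $\ge3$ and carries no sign. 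Hence, with $L:=\langle\tau-s_\ep(\xi)\rangle$ and $L_j:=\langle\tau_j-s_\ep(\xi_j)\rangle$, we have $\max(L,L_1,L_2)\gtrsim|\Phi_\ep|$, and this modulation gain is essentially the only resource for recovering the derivative loss in $\tfrac{|\xi|}{\langle\ep\xi\rangle^2}$.

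Next I would Littlewood--Paley decompose $u,v$ and the output at frequencies $N_1,N_2,N$, assume by symmetry $N_1\ge N_2$ (so $N\lesssim N_1$), and split according to the natural scale $\ep^{-1}$ of $s_\ep$ into a low--low regime $N_1\lesssim\ep^{-1}$, a high--low regime $N_1\gtrsim\ep^{-1}\gtrsim N_2$ (forcing $N\sim N_1$), and a high--high regime $N_1\sim N_2\gtrsim\ep^{-1}$. In the low--low regime $\langle\ep\partial_x\rangle^{-2}\sim1$, $|\Phi_\ep|\sim|\xi\xi_1\xi_2|$, and $s_\ep''(\xi)\sim\xi$ — exactly the Airy situation — so the estimate follows by running the standard KdV bilinear argument with the phase $s_\ep$ in place of $\xi^3$ (the $O(\delta)$ discrepancy in the exponents being absorbed using the surplus regularity $s\ge1$ relative to the sharp KdV threshold $s\ge-\tfrac34$), together with the Airy-type $L^4_{t,x}$ Strichartz estimate for $S_\ep$ on frequencies $\lesssim\ep^{-1}$. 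In the two high regimes I would dyadically decompose $L,L_1,L_2$ and split into the cases where each is the largest: when $L$ is largest use $L^{-\frac{1-9\delta}{2}}\lesssim|\Phi_\ep|^{-\frac{1-9\delta}{2}}$ directly; when $L_1$ or $L_2$ is largest extract a factor $|\Phi_\ep|^\theta\lesssim L_j^\theta$ for a small $\theta>0$; then place two of the three factors in $L^4_{t,x}$ and the third in $L^2_{t,x}$ via H\"older and the transference principle (Lemma~\ref{Lem:linear estimates of Xsb}(5)) together with the $L^4$ Strichartz estimate for $P_N S_\ep(t)$ (Lemma~\ref{Lem:Strichartz}). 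A computation should then show that the loss $\tfrac{|\xi|}{\langle\ep\xi\rangle^2}\lesssim\ep^{-1}$, the degenerate Strichartz constant, and the price $L_j^{-\theta}$ of freeing $|\Phi_\ep|^\theta$ are all dominated by the favorable Sobolev factor $\langle\xi\rangle^s/(\langle\xi_1\rangle^s\langle\xi_2\rangle^s)$ — which is $\lesssim\langle N_2\rangle^{-s}$ in the high--low regime and $\lesssim\langle N_1\rangle^{-s}\lesssim\ep^s$ in the high--high regime — precisely when $s\ge1$, after which one sums the geometric series in $N,N_1,N_2$ and in the modulation scales.

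The hard part will be the high-frequency regimes. Because $s_\ep$ is only asymptotically linear and its second derivative $s_\ep''(\xi)=\tfrac{2\xi(3-\ep^2\xi^2)}{(1+\ep^2\xi^2)^3}$ vanishes at $|\xi|=\sqrt{3}\,\ep^{-1}$, both the modulation gain $|\Phi_\ep|$ and the dispersive (Strichartz) smoothing degenerate exactly in the transition zone $|\xi|\sim\ep^{-1}$, unlike for the clean cubic symbol of KdV or the Boussinesq symbol of \cite{HY2024}. Balancing these competing $\ep$-losses is where the hypothesis $s\ge1$ is really spent, and is why the argument is naturally organized through separate high--high, high--low and low--low lemmas (Lemmas~\ref{Lem:high-high BL}, \ref{Lem:high-low BL} and \ref{Lem:low-low BL}).
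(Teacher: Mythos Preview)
Your high--level plan---split into low--low, high--low, high--high interactions at the threshold $\ep^{-1}$, treat the low--low piece by the KdV-type argument, and handle the high pieces via Strichartz---matches the paper's organization (Lemmas~\ref{Lem:high-high BL}, \ref{Lem:low-low BL}, \ref{Lem:high-low BL}) exactly. The technical execution you sketch, however, diverges from the paper in two places.

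For the high--high piece the paper does \emph{not} use the resonance function $\Phi_\ep$ or any dyadic modulation decomposition. It simply observes that $\tfrac{|\xi|\langle\xi\rangle}{\langle\ep\xi\rangle^2}\lesssim\ep^{-2}$, trading the multiplier for one Sobolev weight, and then runs a direct product estimate: dual Strichartz to land in an $L_t^pL_x^{3/2}$-type norm, the fractional Leibniz rule, and Strichartz again with the admissible pair $(18,3)$. The $\ep^{-2}$ loss is recovered entirely from the high-frequency localization $|\xi_j|\gtrsim\ep^{-1}$ together with the Sobolev weights $\langle\xi_j\rangle^{-s}$, which is exactly where $s\ge1$ is spent; no resonance gain is needed. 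Your modulation-based route might also close, but note that Lemma~\ref{Lem:Strichartz} as stated requires $q>6$ (because at the degenerate point $|\xi|=\sqrt3/\ep$ one has $s_\ep''=0$ and must fall back on third-derivative dispersion), so it does not furnish an $L^4_{t,x}$ estimate; your citation of it for $L^4$ is off and would have to be replaced by the pairs the paper actually uses.

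For the low--low piece the paper does not use Strichartz either. It reduces by Cauchy--Schwarz to a uniform bound on the scalar integral $I_\ep(\tau,\xi)$ and then performs the substitution $z=s_\ep(\xi_1)+s_\ep(\xi-\xi_1)$, which is monotone in $\xi_1$ precisely because the restriction $|\xi_1|,|\xi-\xi_1|\le\tfrac{2}{5\ep}$ keeps the factor in \eqref{derivative of z} positive; a Taylor expansion near the stationary point $\xi_1=\xi/2$ then controls $(2\xi_1-\xi)^{-1}$. Your explicit formula for $\Phi_\ep$ is correct and clarifies the structure, but the paper never invokes it: the whole purpose of the frequency threshold $\sim\ep^{-1}$ is to sidestep the non-monotonicity of $z(\xi_1)$ (Remark~\ref{rmk:tech difficulties}) rather than to quantify $\Phi_\ep$ directly.
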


\begin{remark}\label{cf. KdV flow}
$(i)$ Proposition~\ref{Lem:bilinear for BBM} is an analogue of the well-known bilinear estimate for the Airy flow;
\begin{equation}\label{eq: bilinear for KdV}
\|\partial_x(uv)\|_{X^{s,b-1}}\lesssim\|u\|_{X^{s,b}}\|v\|_{X^{s,b}}
\end{equation}
for $s>-\frac{3}{4}$ and some $\frac{1}{2}<b<1$. The inequality \eqref{eq: bilinear for KdV} is proved in the celebrated work of Kenig-Ponce-Vega \cite{KPV1996} (also refer to the book \cite{Linares2015}), and it is used to establish the local well-posedness of the KdV in the low regularity space  $H^s$ with $s>-\frac{3}{4}$. The inequality \eqref{eq: bilinear for KdV} is shown to be optimal in the sense that it fails when $s\leq-\frac{3}{4}$.\\
$(ii)$ For the bilinear estimate \eqref{eq:bilinear for BBM}, we restrict ourselves to the case $s\geq1$ even though it is not expected optimal, because it is sufficient to extend the interval of validity for the BBM approximation (Theorem \ref{thm: main result 1.2}). It might be desirable to reduce the regularity requirement, but we here do not pursue to do that, since the situation is more complicated below $s=1$ (see Remark \ref{rmk:tech difficulties} below). 
\end{remark}

\subsubsection{Reduction to the integral estimate}\label{sec: reduction}
Following the standard approach \cite{KPV1996}, the typical first step is to reduce the proof of the bilinear estimate to that of a certain integral estimate. Indeed, by the definition of the  norm and the Plancherel theorem, the inequality \eqref{eq:bilinear for BBM} is equivalent to 
$$\begin{aligned}
&\bigg\|\frac{\xi\la\xi\ra^s}{\la\ep\xi\ra^2\la\tau-s_\ep(\xi)\ra^{\frac{1-9\delta}{2}}}\iint_{\mathbb{R}^2}\frac{U(\tau_1, \xi_1)V(\tau-\tau_1,\xi-\xi_1)d\xi_1d\tau_1}{\la\xi_1\ra^s\la\xi-\xi_1\ra^s\la\tau_1-s_\ep(\xi_1)\ra^{\frac{1+\delta}{2}}\la\tau-\tau_1-s_\ep(\xi-\xi_1)\ra^{\frac{1+\delta}{2}}} \bigg\|_{L_{\tau,\xi}^2}\\
&\ls\|U\|_{L_{\tau,\xi}^2}\|V\|_{L_{\tau,\xi}^2}
\end{aligned}$$
taking $U(\tau, \xi)=\la\xi\ra^s\la\tau-s_\ep(\xi)\ra^{\frac{1+\delta}{2}}\tilde{u}(\tau,\xi)$ and $V(\tau,\xi)=\la\xi\ra^s\la\tau-s_\ep(\xi)\ra^{\frac{1+\delta}{2}}\tilde{v}(\tau,\xi)$. Hence, by the Cauchy-Schwarz inequality, it suffices to show that 
$$\frac{\xi^2\la\xi\ra^{2s}}{\la\ep\xi\ra^4\la\tau-s_\ep(\xi)\ra^{1-9\delta}}\iint_{\mathbb{R}^2}\frac{d\xi_1d\tau_1}{\la\xi_1\ra^{2s}\la\xi-\xi_1\ra^{2s}\la\tau_1-s_\ep(\xi_1)\ra^{1+\delta}\la\tau-\tau_1-s_\ep(\xi-\xi_1)\ra^{1+\delta}}$$
is bounded uniformly in $\tau,\xi\in\mathbb{R}$. However, in the above integral, one may drop $\frac{\la\xi\ra^{2s}}{\la\xi-\xi_1\ra^{2s}\la\xi_1\ra^{2s}}$, since $\la\xi\ra^{2s}\lesssim\la\xi_1\ra^{2s}\la\xi-\xi_1\ra^{2s}$ for $s\geq 0$. On the other hand, by the elementary inequality, we have
$$\int_{\mathbb{R}}\frac{d\tau_1}{\la\tau_1-s_\ep(\xi_1)\ra^{1+\delta}\la\tau-\tau_1-s_\ep(\xi-\xi_1)\ra^{1+\delta}}\lesssim \frac{1}{\langle \tau-s_\ep(\xi-\xi_1)-s_\ep(\xi_1)\rangle^{1+\delta}}.$$
Thus, it can be further reduced to show the uniform bound for the integral 
$$I_\ep(\tau,\xi):=\frac{\xi^2}{\la\ep\xi\ra^4\la\tau-s_\ep(\xi)\ra^{1-9\delta}}\int_\R\frac{d\xi_1}{\la\tau-s_\ep(\xi-\xi_1)-s_\ep(\xi_1)\ra^{1+\delta}}\lesssim 1.$$
Here, we may assume that $\xi>0$, because $I_\ep(\tau,-\xi)=I_\ep(-\tau,\xi)$. We also note that in $I_\epsilon(\tau,\xi)$, the integrand $\frac{1}{\la\tau-s_\ep(\xi-\xi_1)-s_\ep(\xi_1)\ra^{1+\delta}}$ is symmetric with respect to $\xi_1=\frac{\xi}{2}$. Therefore, we conclude that Proposition~\ref{Lem:bilinear for BBM} follows from the uniform bound 
\begin{equation}\label{eq: bilinear for BBM}
\sup_{\xi>0,\ \tau\in\mathbb{R}}\frac{2\xi^2}{\la\ep\xi\ra^4\la\tau-s_\ep(\xi)\ra^{1-9\delta}}\int_\frac{\xi}{2}^\infty\frac{d\xi_1}{\la\tau-s_\ep(\xi-\xi_1)-s_\ep(\xi_1)\ra^{1+\delta}}\lesssim1.
\end{equation}

\begin{remark}\label{rmk:tech difficulties}
In the standard approach, one might attempt to estimate $I_\epsilon(\tau,\xi)$ changing the variable by $z(\xi_1)=s_\ep(\xi_1)+s_\ep(\xi-\xi_1)$. However, it turns out that $z(\xi_1)$ is not one-to-one, and rather complicated case study is required, because the sign of
\begin{equation}\label{derivative of z}
\begin{aligned}
 z'(\xi_1)
 &= \frac{3\xi_1^2}{\langle \epsilon\xi_1\rangle^2}-\frac{2\ep^2\xi_1^4}{\langle \ep\xi_1\rangle^4}
-\Big( \frac{3(\xi-\xi_1)^2}{\langle \epsilon(\xi-\xi_1)\rangle^2}-\frac{2\ep^2(\xi-\xi_1)^4}{\langle \ep(\xi-\xi_1)\rangle^4} \Big) \\ 
&=\frac{\xi(2\xi_1-\xi)}{\langle\epsilon\xi_1\rangle^2\langle\epsilon(\xi-\xi_1)\rangle^2}\Big( \frac{2}{\langle\epsilon\xi_1\rangle^2}+\frac{2}{\langle\epsilon(\xi-\xi_1)\rangle^2} -1\Big) 
\end{aligned}
\end{equation}
may change multiple times depending on $\xi>0$.
\end{remark}

To overcome the technical difficulty, we modify the usual argument assuming more regularity. Precisely, we take out the high-high interaction from the bilinear term $\frac{\pa_x}{\la\ep\partial_x\ra^2}(uv)$, where the range of high frequencies is selected so that the sign of $z'(\xi_1)$ changes only in the high-high interaction. Then, the remaining part can be dealt with by the standard method (see Lemma \ref{Lem:low-low BL} and \ref{Lem:high-low BL}). On the other hand, for the high-high interaction, the uniform Strichartz estimates for the rescaled linear flow (Lemma \ref{Lem:Strichartz}) and the transference principle (Lemma \ref{Lem:linear estimates of Xsb} $(5)$) are employed to obtain the desired bilinear estimate (Lemma \ref{Lem:high-high BL}), but it requires to assume that $s\geq 1$. Then, collecting all, Proposition \ref{Lem:bilinear for BBM} follows.

\subsubsection{Proof of Proposition \ref{Lem:bilinear for BBM}}

First, we show the bilinear estimate for the high-high interaction. 
\begin{lemma}[Bilinear estimate; high-high interaction]\label{Lem:high-high BL}
For $s\geq 1$, we have
$$\left\| \frac{\pa_x}{\langle\epsilon\partial_x\rangle^2}\big((P_{>\frac{1}{5\epsilon}}u)(P_{>\frac{1}{5\epsilon}}v)\big)\right\|_{X_{\ep}^{s, -\frac{1-9\delta}{2}}}
\ls \|u\|_{X_{\ep}^{s,\frac{1+\delta}{2}}} \|v\|_{X_{\ep}^{s,\frac{1+\delta}{2}}},$$
provided that $u\equiv 0$ and $v\equiv 0$ outside the time interval $[-1,1]$.
\end{lemma}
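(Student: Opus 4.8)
The plan is to exploit two features that are specific to the high–high piece: the output modulation index $-\tfrac{1-9\delta}{2}$ is nonpositive, so it can simply be dropped, and on the bilinear support the derivative-type symbol $\tfrac{\xi}{\la\ep\xi\ra^2}$ is controlled by the regularity weight $\la\xi\ra^s$ with room to spare once $s\ge1$. Concretely, set $F=\tfrac{\pa_x}{\la\ep\partial_x\ra^2}\big((P_{>\frac1{5\ep}}u)(P_{>\frac1{5\ep}}v)\big)$. Since $\la\tau-s_\ep(\xi)\ra^{-\frac{1-9\delta}{2}}\le 1$, I would first reduce to an $L^2_{t,x}$ bound via
\[
\|F\|_{X_\ep^{s,-\frac{1-9\delta}{2}}}\ \le\ \|F\|_{X_\ep^{s,0}}\ =\ \|\la\partial_x\ra^s F\|_{L^2_{t,x}} .
\]
This is exactly where extracting the high–high interaction pays off: for it one can afford to discard all dispersive smoothing in the output, whereas in the low–low and high–low regimes Lemma~\ref{Lem:low-low BL} and Lemma~\ref{Lem:high-low BL} will still need the full $X^{s,b}$ structure.

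The heart of the argument is a pointwise bound for the symbol on the support of the bilinear interaction. Writing $\xi=\xi_1+\xi_2$ with $|\xi_1|,|\xi_2|>\tfrac1{5\ep}$, one has $\tfrac{|\xi|}{\la\ep\xi\ra^2}\lesssim\ep^{-1}$ for all $\xi\in\R$ and $\ep\in(0,1]$, while $\la\xi\ra^s\lesssim\la\xi_1\ra^s+\la\xi_2\ra^s$ together with $\min(\la\xi_1\ra,\la\xi_2\ra)\gtrsim\ep^{-1}$ yields $\la\xi\ra^s\lesssim\ep^{s}\la\xi_1\ra^s\la\xi_2\ra^s$. Multiplying these,
\[
\frac{|\xi|\,\la\xi\ra^s}{\la\ep\xi\ra^2}\ \lesssim\ \ep^{\,s-1}\,\la\xi_1\ra^s\la\xi_2\ra^s\ \le\ \la\xi_1\ra^s\la\xi_2\ra^s\qquad(s\ge1,\ \ep\in(0,1]),
\]
which is precisely the inequality that forces the restriction $s\ge1$. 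Bounding $|\widetilde F|$ by the convolution of $|\widetilde u|$ and $|\widetilde v|$, introducing $\mathcal U,\mathcal V$ via $\widetilde{\mathcal U}=\la\xi\ra^s|\widetilde u|$ and $\widetilde{\mathcal V}=\la\xi\ra^s|\widetilde v|$, and combining the display above with Plancherel and H\"older, I would reduce to
\[
\|\la\partial_x\ra^s F\|_{L^2_{t,x}}\ \lesssim\ \|\mathcal U\mathcal V\|_{L^2_{t,x}}\ \le\ \|\mathcal U\|_{L^q_tL^r_x}\,\|\mathcal V\|_{L^q_tL^r_x}
\]
for the exponents occurring in the uniform Strichartz estimate of Lemma~\ref{Lem:Strichartz}.

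To finish, I would invoke that Strichartz estimate together with the transference principle, Lemma~\ref{Lem:linear estimates of Xsb}$(5)$, to obtain $\|\mathcal U\|_{L^q_tL^r_x}\lesssim\|\mathcal U\|_{X_\ep^{0,b}}$ with $b=\tfrac{1+\delta}{2}>\tfrac12$; since $\|\mathcal U\|_{X_\ep^{0,b}}=\|u\|_{X_\ep^{s,b}}$ (taking moduli on the Fourier side does not change the restriction norm), and likewise for $\mathcal V$ and $v$, the asserted bilinear estimate follows. The hypothesis that $u$ and $v$ vanish outside $[-1,1]$ enters precisely at this step, so that the Strichartz bound for $S_\ep$ may be applied on a bounded time interval with a constant independent of $\ep$.

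The main obstacle is the Strichartz input itself. Because $s_\ep(\xi)=\tfrac{\xi^3}{1+\ep^2\xi^2}$ loses curvature as $|\xi|\to\infty$ — one has $s_\ep'(\xi)\to\ep^{-2}$, so $S_\ep$ degenerates to a pure transport at exactly the frequencies $|\xi|\gtrsim\ep^{-1}$ appearing here — there is no dispersive gain uniform in $\ep$ on all of $\R_t$, and the admissible pair $(q,r)$ together with the derivative cost in Lemma~\ref{Lem:Strichartz} must be chosen with care; it is the balance between that cost and the gain $\ep^{\,s-1}$ above that pins the threshold at $s=1$. Once Lemma~\ref{Lem:Strichartz} is available, the remaining steps — dropping the output modulation, the symbol bound, H\"older, and transference — are standard.
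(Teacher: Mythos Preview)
There is a genuine gap. After you drop the negative modulation and apply the symbol bound, you are left with proving
\[
\|\mathcal U\,\mathcal V\|_{L^2_{t,x}}\ \lesssim\ \|\mathcal U\|_{X_\ep^{0,\frac{1+\delta}{2}}}\,\|\mathcal V\|_{X_\ep^{0,\frac{1+\delta}{2}}}
\]
uniformly in $\ep$, with $\mathcal U,\mathcal V$ supported at $|\xi|>\tfrac1{5\ep}$. This does \emph{not} follow from Lemma~\ref{Lem:Strichartz}. Transference of that lemma gives $\|\mathcal U\|_{L^q_tL^r_x}\lesssim \big\||\ep\partial_x|^{4/q}\mathcal U\big\|_{X_\ep^{0,b}}$, not $\|\mathcal U\|_{X_\ep^{0,b}}$; the weight $(\ep|\xi|)^{4/q}\ge1$ is a genuine derivative loss that is unbounded as $|\xi|\to\infty$. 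Concretely, to reach $L^2_{t,x}$ by H\"older you need each factor in $L^4_tL^4_x$; using the time localization and the admissible pair $(q,r)=(12,4)$ one gets $\|\mathcal U\|_{L^4_{t,x}}\lesssim \ep^{1/3}\big\||\partial_x|^{1/3}\mathcal U\big\|_{X_\ep^{0,b}}$, hence
\[
\|\mathcal U\,\mathcal V\|_{L^2_{t,x}}\ \lesssim\ \ep^{2/3}\,\|u\|_{X_\ep^{s+\frac13,b}}\,\|v\|_{X_\ep^{s+\frac13,b}},
\]
which cannot be absorbed into $\|u\|_{X_\ep^{s,b}}\|v\|_{X_\ep^{s,b}}$ since there is no upper bound on the input frequencies. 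Your own remark that the Strichartz cost is ``balanced'' by the gain $\ep^{s-1}$ cannot hold at the threshold: at $s=1$ that gain is $\ep^0=1$, so there is nothing left to balance.

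The paper does \emph{not} discard the output modulation; this is precisely where the missing room comes from. It uses $\tfrac{|\xi|\la\xi\ra}{\la\ep\xi\ra^2}\lesssim\ep^{-2}$ to pass to $\|(P_{>}u)(P_{>}v)\|_{X_\ep^{s-1,-\frac{1-9\delta}{2}}}$, and then exploits the negative $b$-index through a \emph{dual} Strichartz estimate (Lemma~\ref{Lem:Strichartz} dualized and interpolated with the trivial $L^2$ bound), obtaining
\[
\|G\|_{X_\ep^{0,-\frac{1-9\delta}{2}}}\ \lesssim\ \ep^{2/9}\,\big\||\partial_x|^{2/9}G\big\|_{L^{p}_tL^{3/2}_x}.
\]
Now $|\partial_x|^{2/9}$ acts on the \emph{product}, one H\"olders into two $L^{18}_tL^3_x$ factors via the fractional Leibniz rule, and applies Strichartz once more on each input. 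With three Strichartz applications (one dual on the output, two direct on the inputs) and the high-frequency support to convert spare regularity into powers of $\ep$, the $\ep$-budget closes exactly at $s=1$. Dropping the output modulation, as you do, forfeits the dual-Strichartz gain and the argument no longer closes below roughly $s\ge\tfrac53$.
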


For the proof, we employ the following Strichartz estimate. 

\begin{lemma}[Strichartz estimates]\label{Lem:Strichartz}
For any admissible pair $(q,r)$ such that $\frac{3}{q}+\frac{1}{r}=\frac{1}{2}$, $6<q\leq\infty$ and $2\leq r<\infty$, we have
\begin{equation}\label{Strichartz1}
    \big\|S_\epsilon(t)P_{>\frac{1}{5\epsilon}} u_0\big\|_{L_t^q(\mathbb{R}; L_x^r)}\lesssim \epsilon^{\frac{4}{q}}\big\||\partial_x|^{\frac{4}{q}}u_0\big\|_{L_x^2}.
\end{equation}
\end{lemma}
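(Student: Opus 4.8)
The plan is to prove the Strichartz estimate \eqref{Strichartz1} for the rescaled linear BBM flow $S_\epsilon(t)=e^{its_\epsilon(-i\partial_x)}$ restricted to high frequencies $|\xi|>\frac{1}{5\epsilon}$, following the classical $TT^*$/oscillatory integral route but keeping careful track of the $\epsilon$-dependence. First I would reduce matters to the endpoint-type estimate: by the standard $TT^*$ argument, \eqref{Strichartz1} for all admissible $(q,r)$ with $\frac{3}{q}+\frac{1}{r}=\frac12$ follows from the dispersive (fixed-time) decay estimate for the frequency-localized propagator together with energy conservation $\|S_\epsilon(t)f\|_{L^2}=\|f\|_{L^2}$, and interpolation/Hardy--Littlewood--Sobolev in time. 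So the core is to establish a bound of the form
\begin{equation*}
\big\|S_\epsilon(t)P_{>\frac{1}{5\epsilon}}P_N u_0\big\|_{L_x^\infty}\lesssim \frac{1}{|t|^{1/6}}\,(\text{power of }\epsilon,N)\,\|u_0\|_{L_x^1}
\end{equation*}
for each Littlewood--Paley piece $P_N$ with $N\gtrsim \epsilon^{-1}$, and then sum over $N$.

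The heart of the matter is the oscillatory integral estimate for the kernel
\begin{equation*}
K_{\epsilon,N}(t,x)=\int_{\mathbb{R}} e^{i(x\xi+t s_\epsilon(\xi))}\,\chi\!\left(\tfrac{\xi}{N}\right)\eta_{>\frac{1}{5\epsilon}}(\xi)\,d\xi,\qquad s_\epsilon(\xi)=\frac{\xi^3}{1+\epsilon^2\xi^2}.
\end{equation*}
On the support $|\xi|\sim N\gtrsim\epsilon^{-1}$, I would compute $s_\epsilon''(\xi)$ and $s_\epsilon'''(\xi)$ and show that the relevant derivative does not vanish with a quantitative lower bound: a direct computation gives $s_\epsilon''(\xi)=\frac{2\xi(3+\epsilon^2\xi^2)}{(1+\epsilon^2\xi^2)^3}$, which in the high-frequency regime $\epsilon^2\xi^2\gtrsim 1$ behaves like $\frac{\xi}{\epsilon^4\xi^4}\sim \frac{1}{\epsilon^4\xi^3}\sim\frac{1}{\epsilon^4 N^3}$. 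Since this is uniform on the dyadic block (no critical point where $s_\epsilon''$ changes magnitude within $|\xi|\sim N$), van der Corput's lemma with second derivatives yields $\|K_{\epsilon,N}(t,\cdot)\|_{L^\infty}\lesssim |t|^{-1/2}(\epsilon^4 N^3)^{1/2}\cdot(\text{measure of support})^{?}$ — but I need to balance this against the trivial bound $\|K_{\epsilon,N}(t,\cdot)\|_{L^\infty}\lesssim N$. Interpolating these (equivalently, splitting the $\xi$-integral and optimizing, or using van der Corput at the right scale after rescaling $\xi=N\zeta$) produces the decay $|t|^{-1/6}$ with the precise power of $\epsilon N$; one then checks that after the $TT^*$ and time-integration steps the powers collapse exactly to $\epsilon^{4/q}\||\partial_x|^{4/q}u_0\|_{L^2}$, and that summation over dyadic $N\gtrsim\epsilon^{-1}$ converges because the $|\partial_x|^{4/q}$ weight together with the $q>6$ (hence subcritical) Strichartz exponent gives a geometrically decaying series in $N$ — this is exactly why the restriction $q>6$ appears rather than $q\geq 6$.

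A cleaner alternative, which I would actually adopt to avoid a delicate multi-scale van der Corput, is to exploit scaling directly. Setting $\xi=\epsilon^{-1}\zeta$, $x=\epsilon \tilde x$, $t=\epsilon^{3}\tilde t$, one has $t s_\epsilon(\xi)=\tilde t\,\frac{\zeta^3}{1+\zeta^2}=:\tilde t\,\sigma(\zeta)$ and $x\xi=\tilde x\zeta$, so that $S_\epsilon(t)P_{>\frac{1}{5\epsilon}}$ is unitarily equivalent (up to $\epsilon$-powers from the change of variables) to the $\epsilon$-free operator $e^{it\sigma(-i\partial)}P_{>1/5}$. For this fixed operator the symbol $\sigma(\zeta)=\frac{\zeta^3}{1+\zeta^2}$ satisfies, on $|\zeta|>\frac15$, $\sigma'''(\zeta)\neq0$ away from a compact set and $\sigma''(\zeta)\sim \zeta^{-3}$ as $|\zeta|\to\infty$; the fixed-time $L^1\to L^\infty$ bound with decay $|t|^{-1/6}$ is then the standard stationary-phase estimate for a symbol of this type (degenerate at infinity like $|\zeta|^{-3}$, which forces the $1/6$ rather than $1/2$ or $1/3$), proved once and for all. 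Undoing the scaling reintroduces precisely the factors $\epsilon^{4/q}$ and $|\partial_x|^{4/q}$. I expect the main obstacle to be the bookkeeping: verifying that the scaling exponents, the $TT^*$ Hardy--Littlewood--Sobolev step (which demands $\frac{3}{q}+\frac1r=\frac12$ with $q>6$ for admissibility of the time integral), and the Littlewood--Paley summation all fit together to give exactly the stated powers — and, more substantively, nailing down the fixed-time dispersive decay for $\sigma(\zeta)=\zeta^3/(1+\zeta^2)$ with the correct $|t|^{-1/6}$ rate uniformly down to the frequency threshold $|\zeta|=\frac15$, where $\sigma''$ is comparable to a constant, so that the estimate interpolates cleanly between the "$\zeta\sim1$" region (behaving like Airy, decay $|t|^{-1/3}$) and the "$\zeta\to\infty$" region (decay $|t|^{-1/6}$), with the worse exponent $1/6$ dominating.
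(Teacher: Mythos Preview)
Your proposal contains a sign error that hides the main difficulty. You write $s_\epsilon''(\xi)=\frac{2\xi(3+\epsilon^2\xi^2)}{(1+\epsilon^2\xi^2)^3}$, but the correct formula is $s_\epsilon''(\xi)=\frac{2\xi(3-\epsilon^2\xi^2)}{(1+\epsilon^2\xi^2)^3}$. Consequently $s_\epsilon''$ \emph{vanishes} at $\xi=\pm\sqrt{3}/\epsilon$, which lies squarely in the region $|\xi|>\frac{1}{5\epsilon}$; equivalently, after your rescaling, $\sigma''(\zeta)=\frac{2\zeta(3-\zeta^2)}{(1+\zeta^2)^3}$ vanishes at $\zeta=\pm\sqrt{3}$. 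Thus your claim that there is ``no critical point where $s_\epsilon''$ changes magnitude within $|\xi|\sim N$'' is false for the dyadic blocks containing $\pm\sqrt{3}/\epsilon$, and the uniform second-derivative lower bound you rely on breaks down there. This is precisely the obstacle the paper isolates: it separates the (at most three) ``bad'' dyadic scales $N_*\in\mathcal{B}$ whose support meets $\pm\sqrt{3}/\epsilon$, and on a small neighborhood of those points uses instead that $|s_\epsilon'''|\sim 1$ to run the third-derivative van der Corput/Strichartz argument directly at the admissibility line $\frac{3}{q}+\frac{1}{r}=\frac{1}{2}$; on all other blocks, $|s_\epsilon''|\sim(\epsilon^4 N^3)^{-1}$ holds, and a Bernstein step converts the $|t|^{-1/2}$-type Strichartz (with admissibility $\frac{2}{q}+\frac{1}{r_0}=\frac{1}{2}$) into the stated bound with factor $(\epsilon N)^{4/q}$.

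Two smaller points. First, the decay rate you target, $|t|^{-1/6}$, does not match the admissibility line $\frac{3}{q}+\frac{1}{r}=\frac{1}{2}$: that line corresponds to $|t|^{-1/3}$, which is exactly what the third-derivative bound near the inflection point produces (and the good blocks do better). Second, the restriction $q>6$ is not what makes the dyadic sum converge---the weight $|\partial_x|^{4/q}$ exactly absorbs the $N^{4/q}$ loss via the $\ell^2$ Littlewood--Paley/Minkowski step (this only needs $q,r\ge 2$); rather, $q>6$ excludes the endpoint $r=\infty$, where the Littlewood--Paley square-function inequality is unavailable. Your scaling reduction to the fixed symbol $\sigma(\zeta)=\zeta^3/(1+\zeta^2)$ is a reasonable organizational device, but it does not sidestep the inflection-point issue: you would still have to split off a neighborhood of $\zeta=\pm\sqrt{3}$ and invoke the third-derivative estimate there, just as the paper does in the $\epsilon$-dependent variables.
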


\begin{proof}

The Littlewood-Paley inequality and the Minkowski inequality leads to 
$$\big\|S_\epsilon(t)u_0\big\|_{L_t^q(\mathbb{R}; L_x^r)}\lesssim
\big\|\big(\sum_{N\in2^{\Z}}|P_N S_\ep(t)u_0|^2 \big)^{\frac{1}{2}} \big\|_{L_t^q(\mathbb{R}; L_x^r)}
\lesssim \big(\sum_{N\in2^{\Z}}\big\|P_N S_\ep(t)u_0\|_{L_t^q(\mathbb{R}; L_x^r)}^2 \big)^{\frac{1}{2}},$$
since $q, r \geq 2$. Hence, it suffices to show that for $N\in 2^{\mathbb{Z}}$ with $N>\frac{1}{5\epsilon}$, 
\begin{equation}\label{eq: dyadic piece Strichartz}
\big\|S_\epsilon(t)P_N u_0\big\|_{L_t^q(\mathbb{R}; L_x^r)}\lesssim (\epsilon N)^{\frac{4}{q}}\|u_0\|_{L_x^2}.
\end{equation}
Indeed, by direct computation, we observe that the second and the third derivatives of the symbol $s_\epsilon(\xi)=\frac{\xi^3}{1+\epsilon^2\xi^2}$ are given by $s_\epsilon''(\xi)=\frac{2\xi(3-\epsilon^2\xi^2)}{(1+\epsilon^2\xi^2)^3}$ and $s_\epsilon'''(\xi)=\frac{6(1-6\epsilon^2\xi^2+\epsilon^4\xi^4)}{(1+\epsilon^2\xi^2)^4}$. Hence, it follows that for $\xi\neq0$, 
\begin{equation}\label{eq: derivatives of s}
\left\{\begin{aligned}
s_\ep''(\xi)&=0&&\textup{if and only if}\quad\xi=\pm\frac{\sqrt{3}}{\ep},\\
s_\ep'''(\xi)&=0&&\textup{if and only if}\quad\xi=\pm\frac{\sqrt{3\pm2\sqrt{2}}}{\ep}.
\end{aligned}\right.
\end{equation}
Thus, there are at most three dyadic numbers $N_*$ such that $\pm\frac{\sqrt{3}}{\ep}\in\textup{supp}\eta(\frac{|\cdot|}{N_*})$. Let $\mathcal{B}$ denote the collection of such dyadic numbers.

Suppose that $N\in 2^{\mathbb{Z}}\setminus\mathcal{B}$. Then, by the Bernstein inequality, we obtain  
$$\|S_\ep(t)P_Nu_0\|_{L_t^q(\R; L_x^r)}=\|S_\ep(t)P_{\frac{N}{2}\leq\cdot\leq 2N}P_Nu_0\|_{L_t^q(\R; L_x^r)}\lesssim N^{\frac{1}{q}}\|S_\ep(t)P_Nu_0\|_{L_t^q(\R; L_x^{r_0})},$$
where $\frac{1}{r_0}=\frac{1}{r}+\frac{1}{q}$. Note that $|s_\ep''(\xi)|\gtrsim \tfrac{1}{\ep^4|\xi|^3}\sim\frac{1}{\epsilon^4 N^3}$ in the support of $\eta(\frac{|\cdot|}{N})$ and that $\frac{2}{q}+\frac{1}{r_0}=\frac{1}{2}$. Therefore, by the the standard argument (see \cite{Cazenave2003, KT1998} for instance), one can show that
\begin{equation}\label{eq: KT method}
\|S_\ep(t)P_Nu_0\|_{L_t^q(\R; L_x^r)}
\lesssim N^{\frac{1}{q}}(\epsilon^4N^3)^{\frac{1}{q}}\|u_0\|_{L_x^2}
=(\epsilon N)^{\frac{4}{q}}\|u_0\|_{L_x^2}.
\end{equation}

It remains to show \eqref{eq: dyadic piece Strichartz} for a dyadic number $N_*\in\mathcal{B}$. In this case, the support of the frequency cut-off $\eta(\frac{|\cdot|}{N_*})$ may include frequencies $\xi_*=\pm\frac{\sqrt{3}}{\ep}$ where the second derivative of $s_\epsilon$ vanishes but the third derivative is non-zero (see \eqref{eq: derivatives of s}). Note also that $\textup{supp}\eta(\frac{|\cdot|}{N_*})$ may contains the frequencies $\pm\frac{\sqrt{3\pm2\sqrt{2}}}{\ep}$ where the third derivative of $s_\epsilon'''$ is zero. If this is the case, we further decompose
$$P_{N_*}=P_{N_* ; 1}+P_{N_* ; 2},$$
where $P_{N_* ; 1}$ is defined by $\widehat{P_{N_* ; 1}u}(\xi):=\mathds{1}_{\{|\xi-\xi_*|\leq \frac{1}{100\ep}\}}\hat{u}(\xi)$ and $P_{N_* ; 2}:=P_{N_*}-P_{N_* ; 1}$. Note that if $|\xi-\xi_*|\leq \frac{1}{100\ep}$, then $|s_\ep'''(\xi)|=\frac{6|1-6\ep^2\xi^2+\ep^4\xi^4|}{|1+\ep^2\xi^2|^4}\sim 1$ holds. Therefore, by the standard argument, it follows that 
$$\|S_\ep(t) P_{N_*;1} u_0\|_{L_t^q(\mathbb{R}; L_x^r)}
\sim(\ep N_*)^{\frac{4}{q}}\|S_\ep(t) P_{N_*;1} u_0\|_{L_t^q(\mathbb{R}; L_x^r)} 
\lesssim (\ep N_*)^{\frac{4}{q}} \|u_0\|_{L_x^2},$$
where in the last inequality, we employed the standard method in the proof of \eqref{eq: KT method} with $|s_\ep'''(\xi)|\sim 1$. On the other hand, repeating the proof of  \eqref{eq: KT method}, one can show that 
$$\|S_\ep(t)P_{N_* ; 2}u_0\|_{L_t^q(\R; L_x^r)}\lesssim (\epsilon N_*)^{\frac{4}{q}}\|u_0\|_{L_x^2}.$$
Therefore, \eqref{eq: dyadic piece Strichartz} follows for $N_*\in\mathcal{B}$. 
\end{proof}

\begin{proof}[Proof of Lemma \ref{Lem:high-high BL}]
Since $\frac{\xi\langle \xi\rangle}{1+\epsilon^2\xi^2}\lesssim\frac{1}{\epsilon^2}$, we have 
\begin{equation}\label{eq: high-high BL proof}
\bigg\| \frac{\pa_x}{\langle\epsilon\partial_x\rangle^2}\big((P_{>\frac{1}{5\epsilon}}u)(P_{>\frac{1}{5\epsilon}}v)\big)\bigg\|_{X_{\ep}^{s, -\frac{1-9\delta}{2}}}
\ls \epsilon^{-2} \big\|(P_{>\frac{1}{5\epsilon}}u)(P_{>\frac{1}{5\epsilon}}v)\big\|_{X_{\ep}^{s-1, -\frac{1-9\delta}{2}}}.
\end{equation}
For the right hand side, we note that by the transference principle (Lemma \ref{Lem:linear estimates of Xsb} $(5)$) and the Strichartz estimate (Lemma~\ref{Lem:Strichartz}) with the admissible pair $(q,r)=(\frac{18}{1+2a}, \frac{3}{1-a})$ for sufficiently small $0<a=\frac{5\delta}{1-9\delta}\ll1$, 
$$\|F\|_{L_t^{\frac{18}{1+2a}}L_x^{\frac{3}{1-a}}}\lesssim \epsilon^{\frac{2(1+2a)}{9}}\big\||\partial_x|^{\frac{2(1+2a)}{9}}F\big\|_{X_{\ep}^{0, \frac{1+\delta}{2}}},$$
whose dual inequality is given by 
$$\|F\|_{X_{\ep}^{0, -\frac{1+\delta}{2}}}\lesssim \epsilon^{\frac{2(1+2a)}{9}}\big\||\partial_x|^{\frac{2(1+2a)}{9}}F\big\|_{L_t^{\frac{18}{17-2a}}L_x^{\frac{3}{2+a}}}.$$
Hence, interpolating with the trivial bound $\|F\|_{X_{\ep}^{0, 0}}\leq \|F\|_{L_t^2(\mathbb{R}; L_x^2)}$, we obtain 
$$\|F\|_{X_{\ep}^{0, -\frac{1-9\delta}{2}}}\lesssim \epsilon^{\frac{2}{9}}\||\partial_x|^{\frac{2}{9}}F\|_{L_t^{\frac{18(1+2a)}{17+16a}}L_x^{\frac{3}{2}}}.$$
Thus, going back to \eqref{eq: high-high BL proof}, applying this bound and the fractional Leibniz rule, it follows that 
$$\begin{aligned}
&\bigg\| \frac{\pa_x}{\langle\epsilon\partial_x\rangle^2}\big((P_{>\frac{1}{5\epsilon}}u)(P_{>\frac{1}{5\epsilon}}v)\big)\bigg\|_{X_{\ep}^{s, -\frac{1-9\delta}{2}}}\\
&\lesssim \epsilon^{-\frac{16}{9}} \big\||\partial_x|^{\frac{2}{9}}\big((P_{>\frac{1}{5\epsilon}}u)(P_{>\frac{1}{5\epsilon}}v)\big)\big\|_{L_t^{\frac{18(1+2a)}{17+16a}}W_x^{s-1,\frac{3}{2}}}\\
&\lesssim \epsilon^{-\frac{16}{9}}\Big\{\|P_{>\frac{1}{5\epsilon}}u\|_{L_t^{18}W_x^{(s-1)+\frac{2}{9},3}}\|P_{>\frac{1}{5\epsilon}}v\|_{L_t^{18}L_x^3}+\|P_{>\frac{1}{5\epsilon}}u\|_{L_t^{18}L_x^3}\|P_{>\frac{1}{5\epsilon}}v\|_{L_t^{18}W_x^{(s-1)+\frac{2}{9},3}}\Big\}.
\end{aligned}$$
Note that $\|P_{>\frac{1}{5\epsilon}}u\|_{L_t^{18}W_x^{(s-1)+\frac{2}{9},3}}\lesssim \epsilon^{\frac{5}{9}}\|P_{>\frac{1}{5\epsilon}}u\|_{L_t^{18}W_x^{s-\frac29,3}}$ and $\|P_{>\frac{1}{5\epsilon}}u\|_{L_t^{18}L_x^3}\lesssim \epsilon^{\frac79}\|P_{>\frac{1}{5\epsilon}}u\|_{L_t^{18}W_x^{\frac79,3}}$, because $\hat{u}$ and $\hat{v}$ are supported in $|\xi|>{\frac{1}{\ep}}$. Therefore, by the transference principle (Lemma~\ref{Lem:linear estimates of Xsb} $(5)$) and the Strichartz estimate (Lemma~\ref{Lem:Strichartz}) with the admissible pair $(18,3)$, we prove the proposition.
\end{proof}

Next, we show the desired bounds for the low-low and high-low interactions. For them, one can employ the standard argument. 

\begin{lemma}[Bilinear estimate; low-low interaction]\label{Lem:low-low BL}
For $s\geq 0$, we have
$$\bigg\| \frac{\pa_x}{\langle\epsilon\partial_x\rangle^2}\big((P_{\leq\frac{1}{5\epsilon}}u)(P_{\leq\frac{1}{5\epsilon}}v)\big)\bigg\|_{X_{\ep}^{s, -\frac{1-9\delta}{2}}}
\ls \|u\|_{X_{\ep}^{s,\frac{1+\delta}{2}}} \|v\|_{X_{\ep}^{s,\frac{1+\delta}{2}}}.$$
\end{lemma}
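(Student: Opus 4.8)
This is the routine piece of the bilinear estimate, and the plan is to reduce it to a one-dimensional integral bound exactly as in Section~\ref{sec: reduction}, now keeping the frequency localizations in place. Repeating that reduction verbatim (the factor $\tfrac{\la\xi\ra^{2s}}{\la\xi_1\ra^{2s}\la\xi-\xi_1\ra^{2s}}$ is still $\ls1$ for $s\ge0$, the $\tau_1$-integral is absorbed in the same way, and the range $\int_{\xi/2}^\infty$ in \eqref{eq: bilinear for BBM} arose by symmetrizing about $\xi_1=\xi/2$), the lemma follows once we show
\begin{equation*}
\sup_{\xi>0,\ \tau\in\R}\ \frac{\xi^2}{\la\ep\xi\ra^4\,\la\tau-s_\ep(\xi)\ra^{1-9\delta}}\int_{\Omega}\frac{d\xi_1}{\la\tau-s_\ep(\xi-\xi_1)-s_\ep(\xi_1)\ra^{1+\delta}}\ls1,
\end{equation*}
where $\Omega:=\{\xi_1\in\R:\ |\xi_1|\le\tfrac1{5\ep},\ |\xi-\xi_1|\le\tfrac1{5\ep}\}$, which is symmetric about $\xi_1=\xi/2$. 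On the support of the integrand one automatically has $|\xi|\le\tfrac2{5\ep}$, hence $\ep^2\xi^2\le\tfrac4{25}$ and $\ep^2\xi_1^2,\ep^2(\xi-\xi_1)^2\le\tfrac1{25}$, so $\la\ep\xi\ra,\la\ep\xi_1\ra,\la\ep(\xi-\xi_1)\ra\sim1$ and the factor $\la\ep\xi\ra^{-4}$ may be discarded.

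The key point is that on $\Omega$ the phase $z(\xi_1):=s_\ep(\xi_1)+s_\ep(\xi-\xi_1)$ behaves just like its Airy counterpart $\xi_1^3+(\xi-\xi_1)^3$. From \eqref{derivative of z} and $\la\ep\xi_1\ra^2,\la\ep(\xi-\xi_1)\ra^2\in[1,\tfrac{26}{25}]$ on $\Omega$, the bracketed factor $\tfrac2{\la\ep\xi_1\ra^2}+\tfrac2{\la\ep(\xi-\xi_1)\ra^2}-1$ lies in $[\tfrac{37}{13},3]$, so $z'(\xi_1)\sim\xi(2\xi_1-\xi)$; in particular $z$ is strictly monotone on each side of $\xi_1=\xi/2$ within $\Omega$, and integrating, $z(\xi_1)-z(\xi/2)\sim\xi(\xi_1-\xi/2)^2$. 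Furthermore a direct computation yields
\begin{equation*}
s_\ep(\xi)-z(\tfrac\xi2)=s_\ep(\xi)-2s_\ep(\tfrac\xi2)=\frac{\tfrac34\,\xi^3}{(1+\ep^2\xi^2)(1+\tfrac14\ep^2\xi^2)}\sim\xi^3\qquad\text{on }\Omega,
\end{equation*}
so the resonance at the turning point has size $\sim\xi^3$, exactly as for KdV.

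With these two facts---the analogues of $\partial_{\xi_1}\big(\xi_1^3+(\xi-\xi_1)^3\big)=3\xi(2\xi_1-\xi)$ and $\xi^3-2(\tfrac\xi2)^3=\tfrac34\xi^3$---one finishes by running the standard argument for the KdV bilinear estimate of Kenig-Ponce-Vega \cite{KPV1996} (see also \cite[Chapter~7]{Linares2015}): use the symmetry of the integrand about $\xi_1=\xi/2$ to reduce to $\xi_1\ge\xi/2$, substitute $\mu=z(\xi_1)$ so that $d\xi_1\sim d\mu/\sqrt{\xi\,(\mu-z(\xi/2))}$, and perform the usual case distinction in the relative sizes of $\la\tau-s_\ep(\xi)\ra$, $\xi^3$ and $|\tau-z(\xi/2)|$; the restriction to $\Omega$ only shrinks the $\mu$-range, so it can only help. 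The main obstacle---and essentially the only place where more than an elementary estimate is needed---is the contribution of a neighbourhood of the critical value $\mu=z(\xi/2)$, where the change of variables produces an integrable $(\mu-z(\xi/2))^{-1/2}$ singularity: using $s_\ep(\xi)-z(\xi/2)\sim\xi^3$, on such a neighbourhood either $\la\tau-s_\ep(\xi)\ra\gtrsim\xi^3$, in which case the prefactor gain $\xi^2\la\tau-s_\ep(\xi)\ra^{-(1-9\delta)}\ls\xi^{-1+27\delta}$ absorbs it, or $\la\tau-\mu\ra\gtrsim\xi^3$ there, in which case the weight $\la\tau-\mu\ra^{-(1+\delta)}$ supplies the needed decay---exactly as in the KdV estimate. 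Collecting the cases gives the uniform bound, and hence the lemma.
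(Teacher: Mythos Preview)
Your argument is correct and follows essentially the same approach as the paper: the same reduction to the integral bound, the same observation that $z'(\xi_1)\sim\xi(2\xi_1-\xi)$ keeps a fixed sign on the low--low set so that the substitution $\mu=z(\xi_1)$ with $d\xi_1\sim d\mu/\sqrt{\xi(\mu-z(\xi/2))}$ applies, and the same resonance identity $s_\ep(\xi)-z(\tfrac\xi2)\sim\xi^3$. Two minor remarks: since $P_{\le N}$ is supported in $|\xi|\le 2N$ your set $\Omega$ should use $\tfrac{2}{5\ep}$ rather than $\tfrac{1}{5\ep}$ (harmless---the bracket in \eqref{derivative of z} is still positive there), and where you defer to a case split \`a la \cite{KPV1996}, the paper finishes more directly via the single calculus inequality $\int_\R\la\tau-z\ra^{-(1+\delta)}|z-c|^{-1/2}\,dz\lesssim\la\tau-c\ra^{-1/2}$ together with $\la\tau-s_\ep(\xi)\ra^{1-9\delta}\la\tau-z(\tfrac\xi2)\ra^{1/2}\gtrsim\la s_\ep(\xi)-z(\tfrac\xi2)\ra^{1/2}\sim|\xi|^{3/2}$, which packages both of your cases at once.
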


\begin{proof}
Repeating the reduction in Section~\ref{sec: reduction}, one can see that it is enough to show that 
\begin{equation}\label{eq: low-low BL proof}
I_\ep(\tau, \xi):=
\sup_{\xi>0,\ \tau\in\mathbb{R}}\frac{2\xi^2}{\la\ep\xi\ra^4\la\tau-s_\ep(\xi)\ra^{1-9\delta}}\int_\frac{\xi}{2}^\infty\frac{\mathds{1}_{|\xi_1|\leq\frac{2}{5\epsilon}}\mathds{1}_{|\xi-\xi_1|\leq\frac{2}{5\epsilon}}}{\la\tau-s_\ep(\xi-\xi_1)-s_\ep(\xi_1)\ra^{1+\delta}}d\xi_1\lesssim1,
\end{equation}
where compared to \eqref{eq: bilinear for BBM}, the additional indicator functions $\mathds{1}_{|\xi_1|\leq\frac{2}{5\epsilon}}$ and $\mathds{1}_{|\xi-\xi_1|\leq\frac{2}{5\epsilon}}$ come from the low frequency projections. 

To estimate the integral in \eqref{eq: low-low BL proof}, we assume that $|\xi-\xi_1|\leq|\xi_1|\leq\frac{2}{5\ep}$ and $0<\frac{\xi}{2}\leq\xi_1<\infty$. Then, $z=z(\xi_1)=s_\epsilon(\xi_1)+s_\epsilon(\xi-\xi_1)$ obeys
\begin{equation}\label{low frequency integral proof 1}
\begin{aligned}
z'(\xi_1)&=\frac{\xi(2\xi_1-\xi)}{\langle\epsilon\xi_1\rangle^2\langle\epsilon(\xi-\xi_1)\rangle^2}\bigg( 
\frac{2}{\langle\epsilon\xi_1\rangle^2}+\frac{2}{\langle\epsilon(\xi-\xi_1)\rangle^2}-1 \bigg) \\
&\geq\frac{\xi(2\xi_1-\xi)}{{\langle 2/5 \rangle^4}}\bigg(\frac{4}{\langle 2/5 \rangle^2}-1\bigg)\sim \xi(2\xi_1-\xi)>0
\end{aligned}
\end{equation}
so that $s_\epsilon(\xi_1)+s_\epsilon(\xi-\xi_1)$ can be substituted by $z$. We note that $z'(\frac{\xi}{2})=0$ by \eqref{low frequency integral proof 1} and applying the Taylor's theorem, there exists $\xi_*\in(\frac{\xi}{2}, \xi_1)$ such that 
\begin{equation}\label{low frequency integral proof 2}
z(\xi_1)=z(\tfrac{\xi}{2}) +\tfrac12z''(\xi_*)(\xi_1-\tfrac{\xi}{2})^2.
\end{equation}
We also have $z''(\xi_*)>0$ since $z(\xi_1)$ is increasing on $[\frac{\xi}{2},\infty)$. Moreover, the direct computation and the mean value theorem yields, for some $\xi_{**}\in(\xi_*-\xi, \xi_*)\subset(-\xi_1, \xi_1)$,
\begin{equation}\label{low frequency integral proof 3}
z''(\xi_*)= s_\ep''(\xi_*) + s_\ep''(\xi-\xi_*)=s_\ep''(\xi_*) - s_\ep''(\xi_*-\xi)=s_\ep'''(\xi_{**})\xi\leq6\xi,
\end{equation}
because $s_\epsilon''(\xi)=\frac{2\xi(3-\ep^2\xi^2)}{\langle \ep\xi\rangle^6}$ is odd and $
s_\epsilon'''(\xi)=\frac{6[(\ep^2\xi^2-3)^2-8]}{\langle \ep\xi\rangle^8}\leq 6$ for $|\xi|\leq|\xi_1|\leq\frac{2}{5\ep}$.
Combining \eqref{low frequency integral proof 2} and \eqref{low frequency integral proof 3}, we obtain
\begin{equation}\label{low frequency integral proof 4}
\frac{1}{2\xi_1-\xi}
=\frac{\sqrt{z''(\xi_*)}}{2\sqrt{2}\sqrt{z(\xi_1)-z(\frac{\xi}{2})}}
\leq \frac{\sqrt{6}|\xi|^{\frac{1}{2}}}{2\sqrt{2}\sqrt{|z-z(\frac{\xi}{2})|}}.
\end{equation}
Now, we apply \eqref{low frequency integral proof 1}, \eqref{low frequency integral proof 4} and basic calculus inequality to $I_{\ep}(\tau,\xi)$. Then, it follows that 
$$\begin{aligned}
|I_{\ep}(\tau,\xi)|
&\lesssim \frac{|\xi|^{\frac{3}{2}}}{\langle\ep\xi\rangle^{4}\la \tau-s_\ep(\xi)\ra^{\left(1-9\delta\right)  }}\int_{\mathbb{R}}\frac{dz}{\la \tau-z\ra^{1+\delta}\sqrt{|z-z(\frac{\xi}{2})|}}\\
&\lesssim\frac{|\xi|^{\frac{3}{2}}}{\langle\ep\xi\rangle^{4}\la \tau-s_\ep(\xi)\ra^{\left(1-9\delta\right)}\la \tau-z(\frac{\xi}{2})\ra^{\frac{1}{2}}}\lesssim\frac{|\xi|^{\frac{3}{2}}}{\langle\ep\xi\rangle^{4}\la z(\frac{\xi}{2})-s_\ep(\xi)\ra^{\frac{1}{2}}}\lesssim1,
\end{aligned}$$
where in the last step, we used the fact that $z(\frac{\xi}{2})-s_\ep(\xi)=-\frac{3}{4}\xi^3\la\ep\xi\ra^{-2}\la\frac{\ep\xi}{2}\ra^{-2}$.
\end{proof}

\begin{lemma}[Bilinear estimate; high-low interaction]\label{Lem:high-low BL}
For $s\geq 1$, we have
$$\bigg\| \frac{\pa_x}{\langle\epsilon\partial_x\rangle^2}\big((P_{>\frac{1}{5\epsilon}}u)(P_{\leq\frac{1}{5\epsilon}}v)\big)\bigg\|_{X_{\ep}^{s, -\frac{1-9\delta}{2}}}
\ls \|u\|_{X_{\ep}^{s,\frac{1+\delta}{2}}} \|v\|_{X_{\ep}^{s,\frac{1+\delta}{2}}},$$
provided that $u\equiv 0$ and $v\equiv 0$ outside the interval $[-1,1]$.
\end{lemma}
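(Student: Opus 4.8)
The plan is to follow the reduction of Section~\ref{sec: reduction} and then split the integration domain into the two frequency regimes that are forced by the Littlewood--Paley supports. Duality, the Cauchy--Schwarz inequality, and integrating out the modulation variable $\tau_1$ exactly as in Section~\ref{sec: reduction} --- using $\la\xi\ra^{2s}\lesssim\la\xi_1\ra^{2s}\la\xi-\xi_1\ra^{2s}$ (valid for $s\geq0$) to discard the Sobolev weights, and the symmetry $(\tau,\xi)\mapsto(-\tau,-\xi)$ to assume $\xi>0$ --- reduce the claim to the uniform bound
$$J_\ep(\tau,\xi):=\frac{\xi^2}{\la\ep\xi\ra^4\la\tau-s_\ep(\xi)\ra^{1-9\delta}}\int_{\mathbb{R}}\frac{\mathds{1}_{|\xi_1|>\frac{1}{5\ep}}\,\mathds{1}_{|\xi-\xi_1|\le\frac{1}{5\ep}}}{\la\tau-s_\ep(\xi_1)-s_\ep(\xi-\xi_1)\ra^{1+\delta}}\,d\xi_1\lesssim1,$$
uniformly in $\tau\in\mathbb{R}$ and $\xi>0$. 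The first step is to read off the geometry of the domain: the two indicators force $\xi_1\in(\max\{\tfrac{1}{5\ep},\xi-\tfrac{1}{5\ep}\},\xi+\tfrac{1}{5\ep})$, and from this together with $\xi>0$ one gets $\xi_1>\tfrac{\xi}{2}$ automatically (the reflected half-line $\xi_1<\tfrac{\xi}{2}$ turns out to be empty); in particular $\la\ep\xi_1\ra\sim\la\ep\xi\ra$ and $\la\ep(\xi-\xi_1)\ra\sim1$ throughout. Since $|\xi-\xi_1|\le\tfrac{1}{5\ep}$ gives $\tfrac{2}{\la\ep(\xi-\xi_1)\ra^2}-1\geq\tfrac{12}{13}>0$, formula \eqref{derivative of z} shows $z'(\xi_1)>0$ on the whole domain, with the lower bound $z'(\xi_1)\gtrsim\tfrac{\xi(2\xi_1-\xi)}{\la\ep\xi\ra^2}$; thus $z(\xi_1):=s_\ep(\xi_1)+s_\ep(\xi-\xi_1)$ is strictly increasing there, and one may change variables $d\xi_1\mapsto dz$ in the inner integral.

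Then I would split according to whether $\xi\geq\tfrac{4}{5\ep}$ or $\xi\leq\tfrac{4}{5\ep}$. In the first case the domain is the fixed-width interval $\xi_1\in(\xi-\tfrac{1}{5\ep},\xi+\tfrac{1}{5\ep})$, on which $2\xi_1-\xi\geq\xi-\tfrac{2}{5\ep}\geq\tfrac{\xi}{2}$, hence $z'(\xi_1)\gtrsim\tfrac{\xi^2}{\la\ep\xi\ra^2}$; bounding the inner integral by $(\inf z')^{-1}\int_{\mathbb{R}}\la\tau-z\ra^{-1-\delta}\,dz\lesssim\la\ep\xi\ra^2\xi^{-2}$ gives $J_\ep\lesssim\la\ep\xi\ra^{-2}\leq1$, without even invoking the resonance factor. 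In the second case every frequency is $O(\ep^{-1})$, so all three weights $\la\ep\xi\ra,\la\ep\xi_1\ra,\la\ep(\xi-\xi_1)\ra$ are $\sim1$ and one is in exactly the configuration of Lemma~\ref{Lem:low-low BL}; I would run that argument verbatim: $z'(\tfrac{\xi}{2})=0$ by \eqref{derivative of z}, Taylor's theorem gives $z(\xi_1)=z(\tfrac{\xi}{2})+\tfrac12 z''(\xi_*)(\xi_1-\tfrac{\xi}{2})^2$ for some $\xi_*\in(\tfrac{\xi}{2},\xi_1)$ (and $z$ is increasing on this interval, since the bracket in \eqref{derivative of z} stays positive there once $\xi\leq\tfrac{4}{5\ep}$), while $s_\ep''$ odd and $s_\ep'''\leq6$ give $0<z''(\xi_*)=s_\ep'''(\xi_{**})\xi\leq6\xi$, whence $\tfrac{1}{2\xi_1-\xi}\lesssim\tfrac{\xi^{1/2}}{\sqrt{|z-z(\xi/2)|}}$. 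Plugging this in, using $\int_{\mathbb{R}}\la\tau-z\ra^{-1-\delta}|z-z(\tfrac{\xi}{2})|^{-1/2}\,dz\lesssim\la\tau-z(\tfrac{\xi}{2})\ra^{-1/2}$, then $\la\tau-s_\ep(\xi)\ra^{1-9\delta}\la\tau-z(\tfrac{\xi}{2})\ra^{1/2}\gtrsim\la z(\tfrac{\xi}{2})-s_\ep(\xi)\ra^{1/2}$ (for $\delta$ small, as $1-9\delta\geq\tfrac12$), and the identity $z(\tfrac{\xi}{2})-s_\ep(\xi)=-\tfrac34\xi^3\la\ep\xi\ra^{-2}\la\tfrac{\ep\xi}{2}\ra^{-2}$ with $\la\ep\xi\ra\sim1$, one obtains $J_\ep\lesssim\xi^{3/2}\la\xi\ra^{-3/2}\lesssim1$. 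Combining the two cases proves $J_\ep\lesssim1$, hence the lemma.

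I do not expect a serious obstacle here; the one point needing care is the domain bookkeeping in the first step --- checking that the reflected half-line $\xi_1<\tfrac{\xi}{2}$ is empty and that the bracket in \eqref{derivative of z} does not change sign on the relevant interval --- and this is precisely where the choice of the threshold $\tfrac{1}{5\ep}$ separating high and low frequencies pays off, since it keeps the sign changes of $z'$ confined to the high--high interaction handled separately in Lemma~\ref{Lem:high-high BL}. Note that the argument above only uses $s\geq0$; the hypothesis $s\geq1$ is retained merely for uniformity with Lemmas~\ref{Lem:high-high BL}--\ref{Lem:low-low BL} and Proposition~\ref{Lem:bilinear for BBM}.
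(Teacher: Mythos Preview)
Your argument is correct and actually takes a somewhat different route from the paper's. The paper first peels off the intermediate band $P_{\frac{1}{50\epsilon}<\cdot\le\frac{1}{5\epsilon}}v$ and disposes of it by repeating the Strichartz-based high--high argument of Lemma~\ref{Lem:high-high BL} (this is precisely where $s\ge1$ and the compact time support enter the proof); only then, for the genuinely low piece $P_{\le\frac{1}{50\epsilon}}v$, does it reduce to an integral estimate, observing that now necessarily $|\xi|\sim|\xi_1|\gtrsim\tfrac{1}{\epsilon}$ so a direct lower bound on $|z'(\xi_1)|$ suffices. You instead bypass Strichartz entirely and split according to the size of the \emph{output} frequency $\xi$: when $\xi\gtrsim\tfrac{1}{\epsilon}$ you recover the same direct $z'$ bound, and when $\xi\lesssim\tfrac{1}{\epsilon}$ all frequencies are low and the argument of Lemma~\ref{Lem:low-low BL} applies verbatim. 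This is more elementary and, as you correctly remark, uses only $s\ge0$ and never invokes the time-support hypothesis --- so your proof in fact yields a slightly stronger statement than the one recorded.

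One bookkeeping point deserves attention. With the paper's Littlewood--Paley conventions (Section~\ref{subsec: notations}), $P_{>\frac{1}{5\epsilon}}u$ is supported in $|\xi_1|>\tfrac{1}{10\epsilon}$ and $P_{\le\frac{1}{5\epsilon}}v$ in $|\xi-\xi_1|\le\tfrac{2}{5\epsilon}$, not at the sharp thresholds $\tfrac{1}{5\epsilon}$ you wrote. With those wider supports the reflected half-line $\xi_1<\tfrac{\xi}{2}$ is \emph{not} always empty (take $\xi=\tfrac{3}{5\epsilon}$, $\xi_1=\tfrac{1}{5\epsilon}$). This does not damage your argument: since $|\epsilon(\xi-\xi_1)|\le\tfrac{2}{5}$ still forces $\tfrac{2}{\la\epsilon(\xi-\xi_1)\ra^2}-1\ge\tfrac{21}{29}>0$, the bracket in \eqref{derivative of z} remains positive, $z$ is monotone on each side of $\tfrac{\xi}{2}$, and the Taylor/low--low estimate in your Case~2 applies symmetrically on $\xi_1<\tfrac{\xi}{2}$ (with the case-split threshold shifted to, say, $\xi\ge\tfrac{8}{5\epsilon}$). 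But the domain check you flagged as the delicate step should be redone with the actual cutoffs.
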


\begin{proof}
First of all, we note that repeating the proof of Lemma~\ref{Lem:high-high BL}, one can show that 
$$\bigg\| \frac{\pa_x}{\langle\epsilon\partial_x\rangle^2}\big((P_{>\frac{1}{5\epsilon}}u)(P_{\frac{1}{50\epsilon}<\cdot\leq\frac{1}{5\epsilon}}v)\big)\bigg\|_{X_{\ep}^{s, -\frac{1-9\delta}{2}}}
\ls \|u\|_{X_{\ep}^{s,\frac{1+\delta}{2}}} \|v\|_{X_{\ep}^{s,\frac{1+\delta}{2}}}.$$
Hence, it remains to show that 
$$\bigg\| \frac{\pa_x}{\langle\epsilon\partial_x\rangle^2}\big((P_{>\frac{1}{5\epsilon}}u)(P_{\leq\frac{1}{50\epsilon}}v)\big)\bigg\|_{X_{\ep}^{s, -\frac{1-9\delta}{2}}}
\ls \|u\|_{X_{\ep}^{s,\frac{1+\delta}{2}}} \|v\|_{X_{\ep}^{s,\frac{1+\delta}{2}}},$$
but as in Section~\ref{sec: reduction}, its proof can be reduced to show that 
\begin{equation}\label{eq: high-low BL proof}
I_\ep(\tau, \xi):=
\frac{2\xi^2}{\la\ep\xi\ra^4\la\tau-s_\ep(\xi)\ra^{1-9\delta}}\int_\frac{\xi}{2}^\infty\frac{\mathds{1}_{|\xi_1|>\frac{1}{10\ep}}\mathds{1}_{|\xi-\xi_1|\leq\frac{1}{25\ep}}}{\la\tau-s_\ep(\xi-\xi_1)-s_\ep(\xi_1)\ra^{1+\delta}}d\xi_1
\end{equation}
is uniformly bounded in $\tau$ and $\xi>0$. Note that if $|\xi_1|>\frac{1}{10\ep}$ and $|\xi-\xi_1|\leq\frac{1}{25\ep}$, then $|\xi|\sim|2\xi_1-\xi|\sim |\xi_1|\gtrsim\frac{1}{\ep}$, and thus,  $z(\xi_1):=s_\ep(\xi_1)+s_\ep(\xi-\xi_1)$ satisfies 

$$|z'(\xi_1)|\sim \frac{|\xi||2\xi_1-\xi|}{(\ep\xi_1)^4}\sim\frac{1}{\ep^4\xi^2}.$$
Hence, by substitution $z=z(\xi_1)$, it follows that 
$$|I_\ep(\tau,\xi)|\lesssim \frac{\xi^2}{\la\ep\xi\ra^4}\int_{\R}\frac{\mathds{1}_{|\xi_1|>\frac{1}{10\epsilon}}\mathds{1}_{|\xi-\xi_1|\leq\frac{1}{25\ep}}}{\la \tau -z\ra^{1+\delta}}\left|\frac{1}{z'(\xi_1)}\right|dz\lesssim1.$$
This shows our lemma.
\end{proof}

\begin{proof}[Proof of Proposition~\ref{Lem:bilinear for BBM}]
The proof follows from the above three bounds (Lemma~\ref{Lem:high-high BL}, \ref{Lem:low-low BL} and ~\ref{Lem:high-low BL}).
\end{proof}

\subsection{Bilinear estimate for the difference in the nonlinear terms}

Next, we prove the following modified bilinear estimate that will be used to measure the difference between the nonlinear parts of the BBM and the KdV equations. Indeed, the key point in this modification is that the lower regularity norm $\|\cdot\|_{X_{\ep}^{0,\frac{1+\delta}{2}}}$ can be taken to the difference (see Section \ref{Sec: Estimate for the difference 1}).

\begin{proposition}[Bilinear estimate for the difference in the nonlinear terms]\label{Lem:BL for difference} For sufficiently small $0<\delta\ll 1$, we have
$$\bigg\| \frac{\pa_x}{\langle\epsilon\partial_x\rangle^2}(uv)\bigg\|_{X_{\ep}^{0, -\frac{1-9\delta}{2}}}
\ls \|u\|_{X_{\ep}^{0,\frac{1+\delta}{2}}} \|v\|_{X_{\ep}^{1,\frac{1+\delta}{2}}}.$$
provided that $u\equiv 0$ and $v\equiv 0$ outside the time interval $[-1,1]$.
\end{proposition}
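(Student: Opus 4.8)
The plan is to follow the same frequency-decomposition strategy used for Proposition~\ref{Lem:bilinear for BBM}, but to keep track of where the regularity is spent so that only one factor needs the $H^1$-level weight. Write $u = P_{\leq\frac{1}{5\epsilon}}u + P_{>\frac{1}{5\epsilon}}u$ and similarly for $v$, and estimate the four pieces separately. The key observation is that in the low-low interaction the frequency of $uv$ is $\lesssim \frac1\epsilon$, so the $\partial_x/\langle\epsilon\partial_x\rangle^2$ symbol is harmless and we can run exactly the calculation of Lemma~\ref{Lem:low-low BL} with $s=0$, needing no derivative on either factor; this already gives the bound with both norms at the $X_\epsilon^{0,\frac{1+\delta}{2}}$ level, which is stronger than what is claimed. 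Thus the content is entirely in the interactions involving at least one high-frequency factor.

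For the high-high interaction I would revisit the proof of Lemma~\ref{Lem:high-high BL}. There, after the Strichartz/transference reduction one arrives at a fractional-Leibniz product estimate in $L_t^{18}W_x^{\sigma,3}$-type spaces, and the $\epsilon$-powers were balanced by pulling $\langle\epsilon\xi\rangle^{-2}$ and using that both frequencies exceed $\frac1\epsilon$. Since here the target regularity is $s=0$ on the output and the nonlinearity is $\partial_x/\langle\epsilon\partial_x\rangle^2(uv)$ (note: no extra $\langle\xi\rangle^s$ weight on the output compared with \eqref{eq:bilinear for BBM}), one has \emph{more} room: the factor $\frac{\xi\langle\xi\rangle^0}{1+\epsilon^2\xi^2}\lesssim\frac1{\epsilon^2}$ as before, and then the Leibniz rule distributes a fractional derivative of total order $\frac29$ among the two factors. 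Putting that derivative, together with the $\epsilon$-gains from high-frequency localization, onto the \emph{single} factor carrying the $X_\epsilon^{1,\frac{1+\delta}{2}}$ norm (which can absorb up to one full derivative) closes the estimate; the other factor only ever appears in $L_t^{18}L_x^3 \lesssim \epsilon^{?}\|\cdot\|_{X_\epsilon^{0,\frac{1+\delta}{2}}}$ via Strichartz with the admissible pair $(18,3)$. The bookkeeping of $\epsilon$-powers is the same as in Lemma~\ref{Lem:high-high BL} and produces a nonnegative net power of $\epsilon$, hence a uniform bound.

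The high-low interactions (high $u$ $\times$ low $v$, and low $u$ $\times$ high $v$) are handled as in Lemma~\ref{Lem:high-low BL}: after splitting off the transitional dyadic shells $P_{\frac1{50\epsilon}<\cdot\leq\frac1{5\epsilon}}$ (which fall under the high-high analysis just described), one reduces to the integral \eqref{eq: high-low BL proof} with the weights now at $s=0$. The crucial point, as there, is that when one factor has frequency $>\frac1{10\epsilon}$ and the other $\leq\frac1{25\epsilon}$ one has $|\xi|\sim|2\xi_1-\xi|\sim|\xi_1|\gtrsim\frac1\epsilon$, so $z'(\xi_1)=s_\epsilon(\xi_1)+s_\epsilon(\xi-\xi_1)$ is monotone with $|z'(\xi_1)|\sim\frac1{\epsilon^4\xi^2}$; the substitution $z=z(\xi_1)$ then gives the bound directly, and the derivative $\langle\xi\rangle\sim\langle\xi_1\rangle$ needed to pass from $\|u\|_{X_\epsilon^{0}}$-only control of the high factor is supplied by the $X_\epsilon^{1,\frac{1+\delta}{2}}$ norm when the high factor is $v$, or is simply not needed (since the output weight is $s=0$) when the high factor is $u$. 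In the asymmetric case where $u$ is the high factor, one should double-check that the $X_\epsilon^{1,\frac{1+\delta}{2}}$ norm sits on $v$, i.e.\ use the product symmetry only up to this asymmetry — this is the one place requiring care, but the integral estimate is symmetric in the two frequency variables so it causes no real difficulty.

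The main obstacle I anticipate is the $\epsilon$-power bookkeeping in the high-high (and transitional high-low) piece: one must verify that, with the output regularity lowered to $0$ and only one input carrying a derivative, the Strichartz gains $\epsilon^{4/q}$ still outweigh the loss $\epsilon^{-2}$ from $\partial_x/\langle\epsilon\partial_x\rangle^2$ together with the $\epsilon^{-?}$ incurred by converting the $\frac29$-order Leibniz derivative on a $>\frac1\epsilon$-localized factor back into the $X_\epsilon^{1}$ or $X_\epsilon^{0}$ norm. Since Lemma~\ref{Lem:high-high BL} already absorbed a \emph{full} extra $\langle\xi\rangle^{s-1}$ with $s\geq1$ and here we only need $s=0$ on the output with the $s=1$ weight available on one input, the balance is at least as favorable; I would present the high-high bound essentially by quoting the computation in the proof of Lemma~\ref{Lem:high-high BL} with the roles of the two factors made asymmetric, and then note that Lemma~\ref{Lem:low-low BL} (at $s=0$) and Lemma~\ref{Lem:high-low BL} (at $s=0$, with the extra derivative placed on $v$) finish the remaining pieces.
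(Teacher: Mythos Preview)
Your decomposition and the treatment of the low--low piece (Lemma~\ref{Lem:low-low BL} at $s=0$) and of the genuinely separated high--low pieces (the integral argument of Lemma~\ref{Lem:high-low BL} at $s=0$) are fine; those parts match the paper's argument for (II) and (III). The gap is in the high--high piece (and hence also in the transitional shells you feed into it).

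You propose to rerun Lemma~\ref{Lem:high-high BL} with the $(18,3)$ pair and ``put the Leibniz derivative on the factor carrying the $X_\epsilon^{1}$ norm''. But the fractional Leibniz rule does not let you choose: it produces \emph{two} terms, and in one of them the $|\partial_x|^{2/9}$ lands on $P_{>\frac{1}{5\epsilon}}u$. After the Strichartz step you then face $\epsilon^{2/9}\||\partial_x|^{4/9}P_{>\frac{1}{5\epsilon}}u\|_{X_\epsilon^{0,\frac{1+\delta}{2}}}$, and on the region $|\xi|>\frac{1}{5\epsilon}$ one has $\epsilon^{2/9}|\xi|^{4/9}=(\epsilon|\xi|)^{2/9}|\xi|^{2/9}\gtrsim|\xi|^{2/9}$, so this quantity is \emph{not} controlled by $\|u\|_{X_\epsilon^{0,\frac{1+\delta}{2}}}$. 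More generally, every admissible pair $(q,r)$ with $r>2$ in Lemma~\ref{Lem:Strichartz} costs $|\partial_x|^{4/q}$, and on high frequencies the accompanying gain $\epsilon^{4/q}$ never cancels it; so as soon as you place $P_{>\frac{1}{5\epsilon}}u$ into any $L_t^qL_x^r$ with $r>2$ via transference, you have already lost derivatives on $u$. Your sentence ``the other factor only ever appears in $L_t^{18}L_x^3\lesssim\epsilon^{?}\|\cdot\|_{X_\epsilon^{0,\frac{1+\delta}{2}}}$'' is therefore false.

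The paper avoids this by reorganizing the decomposition asymmetrically and treating the piece $u\cdot(P_{>\frac{1}{50\epsilon}}v)$ in one shot with a \emph{different} Strichartz input: it uses the near-endpoint pair $\big(\tfrac{6}{1-a},\tfrac{2}{a}\big)$ in the dual estimate, interpolates to land in $L_t^{\sim 6/5}L_x^{\sim 1}$, handles the full multiplier $\tfrac{\partial_x|\epsilon\partial_x|^{2(1-2a)/3}}{\langle\epsilon\partial_x\rangle^2}=\epsilon^{-1}\cdot(\text{H\"ormander--Mikhlin bounded})$ without Leibniz, and then applies H\"older so that $u$ sits in $C_tL_x^2\hookleftarrow X_\epsilon^{0,\frac{1+\delta}{2}}$ with \emph{no} derivative loss. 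The entire $\epsilon^{-1}$ deficit is then converted, via the high-frequency localization of $v$, into the single derivative supplied by $\|v\|_{X_\epsilon^{1,\frac{1+\delta}{2}}}$. This is the missing idea in your sketch.
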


\begin{proof}
Splitting the high-high, low-low and high-low frequency interactions by frequency projections as before, we write 
$$\begin{aligned}
\bigg\| \frac{\pa_x}{\langle\epsilon\partial_x\rangle^2}(uv)\bigg\|_{X_{\ep}^{0, -\frac{1-9\delta}{2}}}&\leq\bigg\| \frac{\pa_x}{\langle\epsilon\partial_x\rangle^2}\Big(u(P_{>\frac{1}{50\epsilon}}v)\Big)\bigg\|_{X_{\ep}^{0, -\frac{1-9\delta}{2}}}\\
&\quad +\bigg\| \frac{\pa_x}{\langle\epsilon\partial_x\rangle^2}\Big((P_{\leq\frac{1}{5\epsilon}}u)(P_{\leq\frac{1}{50\epsilon}}v)\Big)\bigg\|_{X_{\ep}^{0, -\frac{1-9\delta}{2}}}\\
&\quad +\bigg\| \frac{\pa_x}{\langle\epsilon\partial_x\rangle^2}\Big((P_{>\frac{1}{5\epsilon}}u)(P_{\leq\frac{1}{50\epsilon}}v)\Big)\bigg\|_{X_{\ep}^{0, -\frac{1-9\delta}{2}}} \\
&\quad=:\textup{(I)} + \textup{(II)} + \textup{(III)}.
\end{aligned}$$
Then, we will estimate each term on the right-hand side separately. First, by the low-low interaction bilinear estimate (Lemma~\ref{Lem:low-low BL}), we obtain that $\textup{(II)}\ls \|u\|_{X_{\ep}^{0,\frac{1+\delta}{2}}} \|v\|_{X_{\ep}^{1,\frac{1+\delta}{2}}}$. For $\textup{(I)}$, we repeat but slightly modify the argument in the proof of the high-high interaction bilinear estimate (Lemma~\ref{Lem:high-high BL}). Indeed, using the transference principle and the Strichartz estimate (Lemma~\ref{Lem:Strichartz}) with the admissible pair $(\frac{6}{1-a},\frac{2}{a})$ for sufficiently small $0<a\ll\delta$, we observe that
$$\|F\|_{X_\epsilon^{0,-\frac{1+\delta}{2}}}\lesssim \big\||\epsilon\partial_x|^{\frac{2(1-a)}{3}}F\big\|_{L_t^{\frac{6}{5+a}}L_x^{\frac{2}{2-a}}},$$
by duality. Consequently, interpolating with the trivial bound $\|u\|_{X_\epsilon^{0,0}}\leq\|u\|_{L_t^2L_x^2}$ leads to
$$\|F\|_{X_\epsilon^{0,-\frac{1-9\delta}{2}}}\lesssim\big\||\epsilon\partial_x|^{\frac{2(1-2a)}{3}}F\big\|_{L_t^{\frac{6(1-a)}{5-6a-2a^2}}L_x^{\frac{1}{1-a}}}.$$
Hence, it follows that 
$$\textup{(I)}\lesssim\bigg\| \frac{\partial_x|\epsilon\partial_x|^{\frac{2(1-2a)}{3}}}{\langle\epsilon\partial_x\rangle^2}\Big(u(P_{>\frac{1}{50\epsilon}}v)\Big)\bigg\|_{L_t^{\frac{6(1-a)}{5-6a-2a^2}}L_x^{\frac{1}{1-a}}}\lesssim\frac{1}{\epsilon}\big\|u(P_{>\frac{1}{50\epsilon}}v)\big\|_{L_t^{\frac{6(1-a)}{5-6a-2a^2}}L_x^{\frac{1}{1-a}}},$$
where in the second inequality, we used that by the H\"ormander-Mikhlin theorem, the Fourier multiplier $\frac{\epsilon\partial_x|\epsilon\partial_x|^{\frac{2(1-2a)}{3}}}{\langle\epsilon\partial_x\rangle^2}$ is uniformly bounded in the operator norm from $L^{\frac{1}{1-a}}$ to itself. Thus, by embedding of Fourier restriction norm, H\"older's inequality and the Strichartz estimates (Lemma~\ref{Lem:Strichartz}), we obtain
$$\textup{(I)}\lesssim\frac{1}{\epsilon}\|u\|_{C_tL_x^2}\|P_{>\frac{1}{50\epsilon}}v\|_{L_t^{\frac{3}{a}}L_x^{\frac{2}{1-2a}}}\lesssim\frac{1}{\epsilon^{1-\frac{4a}{3}}}\|u\|_{X_{\ep}^{0,\frac{1+\delta}{2}}} \left\||\partial_x|^{\frac{4a}{3}}P_{>\frac{1}{50\epsilon}}v\right\|_{X_{\ep}^{0,\frac{1+\delta}{2}}}.$$
Finally, using that $P_{>\frac{1}{50\epsilon}}v$ is supported in $|\xi|\gtrsim\frac{1}{\epsilon}$ on the Fourier side, we conclude that 
$$\textup{(I)}\ls \|u\|_{X_{\ep}^{0,\frac{1+\delta}{2}}} \|v\|_{X_{\ep}^{1,\frac{1+\delta}{2}}}.$$
It remains to estimate $\textup{(III)}$. For this, we argue as we did for the low-low frequency interaction bilinear estimate (Lemma~\ref{Lem:low-low BL}). Indeed, following the similar argument in Section~\ref{sec: reduction}, we reduce the proof of the desired bound $\textup{(III)}\ls \|u\|_{X_{\ep}^{0,\frac{1+\delta}{2}}} \|v\|_{X_{\ep}^{1,\frac{1+\delta}{2}}}$ to show that 
$$I_\ep(\tau,\xi):=\frac{2\xi^2}{\langle\ep\xi\rangle^4\la \tau-s_\ep(\xi)\ra^{\left(1-9\delta\right)}}\int_\R\frac{\mathds{1}_{|\xi_1|\geq\frac{1}{10\epsilon}}\mathds{1}_{|\xi-\xi_1|\leq\frac{1}{25\epsilon}}}{\la \tau -s_\epsilon(\xi_1)-s_\epsilon(\xi-\xi_1)\ra^{1+\delta}\langle\xi-\xi_1\rangle^2}d\xi_1$$
is bounded uniformly in $\tau$ and $\xi>0$. In this domain of $\xi$, we have $|2\xi_1-\xi|\sim|\xi_1|\geq\frac{1}{10\epsilon}$ and $|\epsilon(\xi-\xi_1)|\leq\frac{1}{25}$.
To handle the integrand of $I_\ep(\tau, \xi)$, we define $z(\xi_1):=s_\epsilon(\xi_1)+s_\epsilon(\xi-\xi_1)$, as before, and compute
$$\begin{aligned}
|z'(\xi_1)|&=\frac{|\xi||2\xi_1-\xi|}{\langle\epsilon\xi_1\rangle^2\langle\epsilon(\xi-\xi_1)\rangle^2}\left(2\left(\frac{1}{\langle\epsilon\xi_1\rangle^2}+\frac{1}{\langle\epsilon(\xi-\xi_1)\rangle^2} \right)-1 \right)\\
&\gtrsim\frac{|\xi||\xi_1|}{\epsilon^2\xi_1^2}\left(\frac{2}{\langle\frac{1}{25}\rangle^2}-1\right)\sim\frac{|\xi|}{\epsilon^2|\xi_1|}\geq\frac{|\xi|}{\epsilon^2(|\xi-\xi_1|+|\xi|)}.
\end{aligned}$$
Thus, substituting $z=z(\xi_1)$ in $I_\ep(\tau,\xi)$, we obtain that 
$$\begin{aligned}
|I_\ep(\tau,\xi)|&\lesssim\frac{\xi^2}{\langle\ep\xi\rangle^4}\int_{-\infty}^\infty\frac{1}{\la \tau -z\ra^{1+\delta}\langle\xi-\xi_1\rangle^2}\frac{\epsilon^2(|\xi-\xi_1|+|\xi|)}{|\xi|}dz\\
&\lesssim\frac{\epsilon|\xi|}{\langle\ep\xi\rangle^4}\int_{-\infty}^\infty\frac{dz}{\la \tau -z\ra^{1+\delta}}+\frac{\epsilon^2\xi^2}{\langle\ep\xi\rangle^4}\int_{-\infty}^\infty\frac{dz}{\la \tau -z\ra^{1+\delta}}\lesssim1,
\end{aligned}$$
which completes the proof of the proposition.
\end{proof}

\section{Uniform bounds for nonlinear solutions}\label{sec: uniform bounds for nonlinear solutions}

As an application of the bilinear estimates in the previous section, we prove the local well-posedness for the rescaled BBM equation with a uniform bound.

\begin{proposition}[Local well-posedness and uniform bound for $\textup{BBM}_\epsilon$]\label{Prop : BBM LWP}
Let $s\geq 1$, and assume that  
$$\sup_{\epsilon\in(0,1]}\|u_{\ep,0}\|_{H^s}\leq R.$$
Then, there exist $T=T(R)>0$, independent of $\ep\in(0,1]$, and a unique solution $u_\ep(t)\in C_t([-T,T]; H^s)$ to the rescaled BBM equation \eqref{integral BBM} with initial data $u_{\ep,0}$ such that
$$\sup_{\epsilon\in(0,1]}\|u_{\ep}\|_{X_{\ep}^{s,\frac{1+\delta}{2}}} \lesssim R$$
for some sufficiently small $\delta>0$.
\end{proposition}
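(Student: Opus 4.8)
The plan is to run a standard contraction mapping argument on the Duhamel map
\[
\Phi_\epsilon[u_\epsilon](t):=\eta_T(t)S_\epsilon(t)u_{\epsilon,0}-\eta_T(t)\int_0^t S_\epsilon(t-t_1)\frac{\partial_x}{\langle\epsilon\partial_x\rangle^2}\bigl(\eta_T(t_1)u_\epsilon(t_1)\bigr)^2\,dt_1
\]
in the ball $B_\rho:=\{u_\epsilon\in X_\epsilon^{s,\frac{1+\delta}{2}}:\|u_\epsilon\|_{X_\epsilon^{s,\frac{1+\delta}{2}}}\le \rho\}$ with $\rho\sim R$, working entirely in the Fourier restriction space $X_\epsilon^{s,b}$ with $b=\frac{1+\delta}{2}>\frac12$. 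The point is that every estimate entering the fixed-point argument is \emph{uniform in $\epsilon\in(0,1]$}, so the resulting existence time $T=T(R)$ and the uniform bound follow simultaneously. Since the bilinear estimate (Proposition~\ref{Lem:bilinear for BBM}) requires the inputs to vanish outside $[-1,1]$, I first fix $T\in(0,1]$ so that the time cutoffs $\eta_T$ are supported there; then $\eta_T u_\epsilon$ is admissible for that proposition.

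The key steps, in order: (1) \textbf{Linear term.} By Lemma~\ref{Lem:linear estimates of Xsb}(2), $\|\eta_T(t)S_\epsilon(t)u_{\epsilon,0}\|_{X_\epsilon^{s,b}}\lesssim T^{\frac12-b}\|u_{\epsilon,0}\|_{H^s}\le T^{\frac12-b}R$; note $\frac12-b=-\frac{\delta}{2}<0$, so we keep $T\le1$ and this contributes $\lesssim R$. (2) \textbf{Nonlinear term.} By Lemma~\ref{Lem:linear estimates of Xsb}(4) (inhomogeneous estimate, applicable since $b\in(\frac12,1]$) followed by Lemma~\ref{Lem:linear estimates of Xsb}(3) (time-localization, gaining a positive power of $T$ since $-\frac12<b-1<\frac12$) and then Proposition~\ref{Lem:bilinear for BBM},
\[
\Bigl\|\eta_T(t)\int_0^t S_\epsilon(t-t_1)\tfrac{\partial_x}{\langle\epsilon\partial_x\rangle^2}(\eta_T u_\epsilon)^2\,dt_1\Bigr\|_{X_\epsilon^{s,b}}
\lesssim \bigl\|\tfrac{\partial_x}{\langle\epsilon\partial_x\rangle^2}(\eta_T u_\epsilon)^2\bigr\|_{X_\epsilon^{s,b-1}}
\lesssim T^{\theta}\|u_\epsilon\|_{X_\epsilon^{s,b}}^2,
\]
where $\theta>0$ comes from the $\eta_T$-localization of the bilinear output at regularity $b-1=-\frac{1-\delta}{2}\le -\frac{1-9\delta}{2}$ together with the nesting property of $X_\epsilon^{s,\cdot}$; all implicit constants are $\epsilon$-independent. (3) \textbf{Closing the contraction.} Combining, $\|\Phi_\epsilon[u_\epsilon]\|_{X_\epsilon^{s,b}}\lesssim R+T^{\theta}\rho^2$, and the bilinear (polarized) version gives the Lipschitz bound $\|\Phi_\epsilon[u_\epsilon]-\Phi_\epsilon[v_\epsilon]\|_{X_\epsilon^{s,b}}\lesssim T^{\theta}(\|u_\epsilon\|_{X_\epsilon^{s,b}}+\|v_\epsilon\|_{X_\epsilon^{s,b}})\|u_\epsilon-v_\epsilon\|_{X_\epsilon^{s,b}}\lesssim T^{\theta}\rho\,\|u_\epsilon-v_\epsilon\|_{X_\epsilon^{s,b}}$; choosing $\rho=CR$ and then $T=T(R)\in(0,1]$ small enough (independent of $\epsilon$) that $CT^{\theta}\rho\le\frac12$, $\Phi_\epsilon$ is a contraction on $B_\rho$, yielding the unique fixed point $u_\epsilon$ with $\|u_\epsilon\|_{X_\epsilon^{s,b}}\le\rho\lesssim R$. (4) \textbf{Wrap-up.} On $[-T,T]$ this fixed point solves \eqref{integral BBM}; the embedding $X_\epsilon^{s,b}\subset C_t(\mathbb{R};H^s_x)$ from Lemma~\ref{Lem:linear estimates of Xsb}(1) gives $u_\epsilon\in C_t([-T,T];H^s)$, and uniqueness in the full class $C_t([-T,T];H^s)$ follows by a standard argument (difference estimate on a possibly shorter interval, then a continuity/bootstrap to cover $[-T,T]$, using global well-posedness of $\textup{BBM}_\epsilon$ for each fixed $\epsilon$ to identify the fixed-point solution with the global one).

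The main obstacle is really packaged into Step (2): one must be careful that the gain of a positive power $T^\theta$ survives uniformly in $\epsilon$. This is where the precise exponents matter — the inhomogeneous estimate in Lemma~\ref{Lem:linear estimates of Xsb}(4) is stated with $b-1$ on the right, but Proposition~\ref{Lem:bilinear for BBM} produces the output in $X_\epsilon^{s,-\frac{1-9\delta}{2}}$, so one routes through the time-localization Lemma~\ref{Lem:linear estimates of Xsb}(3) with exponents $b-1=-\frac{1-\delta}{2}$ and $b'=-\frac{1-9\delta}{2}$ (both in $(-\frac12,\frac12)$ for small $\delta$), which legitimately gains $T^{(b-1)-b'}=T^{4\delta}$, and this constant is $\epsilon$-independent because Lemma~\ref{Lem:linear estimates of Xsb} is a general property of $X^{s,b}_{\tau=p(\xi)}$ spaces applied with $p=s_\epsilon$. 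Everything else — the algebra of choosing $\rho$ and $T$, the polarization for the Lipschitz estimate, and the final identification/uniqueness — is routine and adds no new $\epsilon$-dependence.
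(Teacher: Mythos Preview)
Your proposal is correct and follows essentially the same approach as the paper: a standard contraction mapping in $X_\epsilon^{s,b}$ with $b=\tfrac{1+\delta}{2}$, using Lemma~\ref{Lem:linear estimates of Xsb} for the linear/inhomogeneous pieces and Proposition~\ref{Lem:bilinear for BBM} for the bilinear term, with a small positive power of $T$ gained via time localization (Lemma~\ref{Lem:linear estimates of Xsb}(3)) between the exponents $b-1$ and $-\tfrac{1-9\delta}{2}$. The only cosmetic difference is the placement of cutoffs---the paper uses $\eta_1$ on the outside and $\eta_T$ only on the forcing, which avoids the harmless $T^{\frac12-b}=T^{-\delta/2}$ loss that your $\eta_T$-everywhere convention incurs on the linear term; either way the contraction closes with $T=T(R)$ independent of $\epsilon$.
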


\begin{remark}
The global well-posedness of the BBM \eqref{eq: BBM normalized} with normalized coefficients has been proved in \cite{BBM1972}, and then it is extended later in a lower a lower regularity space $H^s$ for $s\geq0$. However, this well-posedness does not directly imply Proposition \ref{Prop : BBM LWP} for the rescaled model \eqref{eq: BBM}. 

\end{remark}

The above proposition corresponds to the well-posedness result of the following well-known result for the KdV equation.
\begin{theorem}[Local well-posedness for the KdV equation \cite{KPV1993, KPV1996}]\label{thm : KdV LWP}
Let $s>-\frac{3}{4}$. Then, for $w_0\in H^s(\R)$, there exist $T=T(\|w_0\|_{H^s_x})>0$ and a unique solution $w(t)\in C_t([-T,T]:H^s)$ to the KdV equation \eqref{integral KdV} with initial data $w_0$ such that $\|w\|_{X^{s,\frac{1+\delta}{2}}}\lesssim \|w_0\|_{H^s}$, where $\delta>0$ is a sufficiently small number.
\end{theorem}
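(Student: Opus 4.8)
This is the Kenig--Ponce--Vega local well-posedness theorem, and I would reprove it by a Picard iteration for the Duhamel formulation \eqref{integral KdV} in the Bourgain space $X^{s,b}$ with $b=\tfrac{1+\delta}{2}$, $\delta>0$ small, drawing all the analytic input from Lemma \ref{Lem:linear estimates of Xsb} and the bilinear estimate \eqref{eq: bilinear for KdV}. Fix $T\in(0,1]$ and the time cutoff $\eta_T$ of Lemma \ref{Lem:linear estimates of Xsb}, and consider the map
\[
\Phi(w)(t):=\eta_T(t)\,S(t)w_0-\eta_T(t)\int_0^tS(t-t_1)\,\partial_x\big(w(t_1)^2\big)\,dt_1
\]
on $X^{s,b}$. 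Any fixed point of $\Phi$ restricts on $[-T,T]$ to a solution of \eqref{integral KdV}; note also that a fixed point is automatically localized in time to the support of $\eta_T$, which is what makes the time-cutoff gain below available.

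All the estimates needed are already in hand. Lemma \ref{Lem:linear estimates of Xsb}$(2)$ gives $\|\eta_TS(t)w_0\|_{X^{s,b}}\lesssim\|w_0\|_{H^s}$; Lemma \ref{Lem:linear estimates of Xsb}$(4)$ bounds the Duhamel part by $\|\partial_x(w^2)\|_{X^{s,b-1}}$; and the bilinear estimate \eqref{eq: bilinear for KdV}, valid precisely for $s>-\tfrac34$, yields $\|\partial_x(w^2)\|_{X^{s,b-1}}\lesssim\|w\|_{X^{s,b}}^2$ together with its polarized form $\|\partial_x(w^2-\tilde w^2)\|_{X^{s,b-1}}\lesssim(\|w\|_{X^{s,b}}+\|\tilde w\|_{X^{s,b}})\|w-\tilde w\|_{X^{s,b}}$. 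A small positive power $T^{\theta}$ is then extracted in front of the nonlinear term by the usual time-cutoff device: one applies the stability-under-time-localization estimate Lemma \ref{Lem:linear estimates of Xsb}$(3)$ to $\eta_T\partial_x(w^2)$, which trades a factor $T^{\theta}$ for a slight relaxation of the modulation index, permissible because \eqref{eq: bilinear for KdV} holds on an open range of indices (equivalently, one runs the argument with $b$ chosen slightly inside the admissible window). The net effect is that, for some $\theta>0$,
\[
\|\Phi(w)\|_{X^{s,b}}\le C\|w_0\|_{H^s}+C\,T^{\theta}\|w\|_{X^{s,b}}^2,\qquad\|\Phi(w)-\Phi(\tilde w)\|_{X^{s,b}}\le C\,T^{\theta}\big(\|w\|_{X^{s,b}}+\|\tilde w\|_{X^{s,b}}\big)\|w-\tilde w\|_{X^{s,b}} .
\]

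Next I would close the fixed point in the standard way. Setting $r:=2C\|w_0\|_{H^s}$ and choosing $T=T(\|w_0\|_{H^s})\in(0,1]$ so small that $4C\,T^{\theta}r\le1$, the map $\Phi$ sends the ball $\{\,\|w\|_{X^{s,b}}\le r\,\}$ into itself and contracts it, so the Banach fixed point theorem gives a unique $w\in X^{s,b}$ with $\Phi(w)=w$; this $w$ solves \eqref{integral KdV} on $[-T,T]$ and satisfies $\|w\|_{X^{s,b}}\lesssim\|w_0\|_{H^s}$. Continuity $w\in C_t([-T,T];H^s_x)$ is the embedding Lemma \ref{Lem:linear estimates of Xsb}$(1)$, since $b>\tfrac12$. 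Uniqueness in the natural class (solutions lying in $X^{s,b}$ over compact time subintervals) and the Lipschitz dependence of $w$ on the data follow from the same difference estimate combined with a continuity/connectedness argument in $t$.

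The only substantial ingredient is the bilinear estimate \eqref{eq: bilinear for KdV}, which I would simply quote from \cite{KPV1996} and which is exactly where the hypothesis $s>-\tfrac34$ enters. Its proof is the delicate step: after the reduction (analogous to Section \ref{sec: reduction}) to a uniform bound for a weighted convolution integral in the modulation variables, one exploits the resonance identity $\xi^3-\xi_1^3-(\xi-\xi_1)^3=3\,\xi\,\xi_1(\xi-\xi_1)$ --- which forces one of the three modulation weights to dominate --- and combines Cauchy--Schwarz with the $L^2$-type (Strichartz/local-smoothing) bounds for the Airy group; the borderline case $s=-\tfrac34$ is where the estimate becomes sharp. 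Granted \eqref{eq: bilinear for KdV} and Lemma \ref{Lem:linear estimates of Xsb}, the rest of the proof of Theorem \ref{thm : KdV LWP} is routine, the only mild care being the $T^{\theta}$ bookkeeping described above.
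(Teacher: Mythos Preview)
The paper does not prove Theorem~\ref{thm : KdV LWP}; it is simply quoted from \cite{KPV1993, KPV1996} as a known result. Your sketch is the standard Kenig--Ponce--Vega contraction argument and is correct, and it mirrors exactly what the paper does carry out for the BBM analogue in Proposition~\ref{Prop : BBM LWP}.
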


\begin{proof}[Proof of Proposition~\ref{Prop : BBM LWP}]
We define 
$$\Phi_\ep(u):=\eta_1(t)S_\ep(t) u_{\ep,0}-\eta_1(t)\int_0^tS_\ep(t-t_1)\eta_T(t_1)\frac{\partial_x}{\la\ep\partial_x\ra^2}u_\ep(t_1)^2 dt_1,$$
where $\eta$ is a smooth cut-off given in Section \ref{subsec: notations} and $\eta_T=\eta(\frac{\cdot}{T})$. Then, applying the basic Fourier restriction norm bound (Lemma~\ref{Lem:linear estimates of Xsb}) and the bilinear estimate (Proposition~\ref{Lem:bilinear for BBM}) to the nonlinear term, we obtain 
$$\begin{aligned}
\|\Phi_\ep(u)\|_{X_{\ep}^{s,\frac{1}{2}+\delta}}
   &\lesssim \|u_{\ep,0}\|_{H^s} + \left\|\eta_{T}\frac{\pa_x}{\la\ep\partial_x\ra^2}u^2\right\|_{X_{\ep}^{s,-(\frac{1}{2}-\delta)}}\\ 
   &\lesssim R+ T^{8\delta}\left\|\frac{\pa_x}{\la\ep\partial_x\ra^2}u^2\right\|_{X_{\ep}^{s,-(\frac{1}{2}-9\delta)}}\lesssim R+T^{8\delta} \|u\|_{X_{\ep}^{s,\frac{1}{2}+\delta}}^2.
\end{aligned}$$
In other words, there exists $c>0$, independent of $\ep\in(0,1]$, such that  
$$\|\Phi_\ep(u)\|_{X_{\ep}^{s,\frac{1+\delta}{2}}}\leq cR + cT^{8\delta}\|u\|_{X_{\ep}^{s,\frac{1}{2}+\delta}}^2.$$
Similarly, for the difference, one can show that 
$$\|\Phi_\ep(u_1)-\Phi_\ep(u_2)\|_{X_{\ep}^{s,\frac{1}{2}+\delta}}\leq cT^{8\delta}\Big(\|u_1\|_{X_\ep^{s,\frac{1+\delta}{2}}}+\|u_2\|_{X_\ep^{s,\frac{1+\delta}{2}}}\Big)\|u_1-u_2\|_{X_\ep^{s,\frac{1+\delta}{2}}}.$$
Thus, taking $T=(\frac{1}{16c^2R})^{\frac{1}{8\delta}}$, we prove that $\Phi_\epsilon$ is contractive on $\{u\in X_{\ep}^{s,\frac{1+\delta}{2}} : \| u\|_{X_{\ep}^{s,\frac{1+\delta}{2}}}\leq 2cR \}$. Consequently, the proposition follows from the Banach fixed point theorem.
\end{proof}

\section{Proof of the main results (Theorem \ref{thm: main result 1.1} and \ref{thm: main result 1.2})}\label{sec: proof of the main result}

This section is devoted to the proof of our main results. In Section \ref{sec: proof of the local theorem}, using the linear and the bilinear estimates obtained in previous sections, we show that the BBM regularization is valid for energy class solutions on a short time interval (Theorem \ref{thm: main result 1.1}). In the proof, in order to circumvent the technical difficulty (see Remark \ref{remark: technical difficiulty to measure difference}), we also adopt the strategy in Hong-Yang \cite{HY2024}, where the KdV is derived from the Boussinesq equation in a similar context. Then, in Section \ref{sec: proof of the global theorem}, we extend the time interval of validity arbitrarily by the conservation laws, which completes the proof of Theorem \ref{thm: main result 1.2}.

\subsection{BBM regularization (Proof of Theorem \ref{thm: main result 1.1})}\label{sec: proof of the local theorem}

Throughout this section, we assume \eqref{eq: main result 1 initial data condition} on initial data for some $1\leq s\leq 5$ and $R>0$, and let $u_\epsilon(t)$ (resp., $w(t)$) be the global solution to $\textup{BBM}_\epsilon$ \eqref{eq: BBM} (resp., the KdV \eqref{eq: KdV}) with initial data $u_{\ep,0}$ (resp., $w_0$). Then, we aim to measure the difference $(u_\epsilon-w)$, but some technical issue arises in the use of the Fourier restriction norm.

\begin{remark}\label{remark: technical difficiulty to measure difference}
It is not easy to estimate the difference $(u_\epsilon-w)$ directly in the Fourier restriction norm, because the norm is too sensitive to the choice of the associated linear flow. Indeed, the rescaled linear BBM flow $S_\ep(t)$ behaves completely differently from the Airy flow $S(t)$ in high frequencies; if $|\xi|\gg\frac{1}{\ep}$, then $s_\ep(\xi)$ is almost linear, i.e., $\approx \frac{\xi}{\ep^2}$ but $s(\xi)=\xi^3$ is cubic. Thus, high regularity is required to bound $\|S_\ep(t)u_0\|_{X^{0,b}}$ and $\|S(t)u_0\|_{X_\ep^{0,b}}$, which is not good as our goal is to reduce the regularity requirement.
\end{remark}

To overcome the problem, following the strategy in Hong-Yang \cite{HY2024}, we introduce the following auxiliary equation which we call the frequency localized rescaled BBM equation 
\begin{equation}\label{Auxiliary equation}
v_{\ep}(t)=S_\ep(t)P_{\leq N}u_{\ep,0} -\int_0^tS_\ep(t-t_1)  P_{\leq N}\frac{\partial_x }{\la\ep\partial_x\ra^2}(P_{\leq N}v_\ep(t_1))^2dt_1,
\end{equation}
where $P_{\leq N}$ is the Littlewood-Paley projection onto the frequency $|\xi|\leq 2N$ with $N=\frac{1}{2}\epsilon^{-\frac{2}{5}}$ (see Section \ref{subsec: notations}). For this equation, one can show the local well-posedness and uniform bounds similar to Proposition~\ref{Prop : BBM LWP} repeating the analysis in the previous section, because the equation \eqref{Auxiliary equation} has almost the same structure as the rescaled BBM \eqref{integral BBM} as well as $P_{\leq N}$ only makes the terms smaller.

From now on, we fix $1\leq s\leq 5$, let $0<\delta\ll 1$ be sufficiently small, and set
\begin{equation}\label{T choice}
T:=\min\left\{ 1, \left(\frac{c_0}{R}\right)^{(1/8\delta)}\right\}
\end{equation}
where $c_0>0$ is a small number to be chosen later. Then, by the assumptions on the initial data \eqref{eq: main result 1 initial data condition}, it follows from Theorem~\ref{thm : KdV LWP} and \ref{Prop : BBM LWP} that $u_{\ep}(t)$, $v_{\ep}(t)$ and $w(t)$ all exist on the same time interval $[-T,T]$, and they satisfy the uniform bounds
\begin{equation}\label{all solution uniform bounds}
\|u_{\ep}\|_{X_{\ep}^{s,\frac{1}{2}+\delta}}+\|v_{\ep}\|_{X_{\ep}^{s,\frac{1}{2}+\delta}}+\|w\|_{X^{s,\frac{1}{2}+\delta}}\lesssim R.
\end{equation}

In a sequel, we estimate the differences $(u_\epsilon-v_\epsilon)$ and $(v_\epsilon-w_\epsilon)$ separately. Indeed, the two nonlinear solutions $u_\epsilon$ and $v_\epsilon$ share the same linear propagator $S_\ep(t)$, and thus the difference $\|u_\epsilon-v_\epsilon\|_{X_\ep^{0,\frac{1}{2}+\delta}}$ can be measured. On the other hand, the difference $(v_\epsilon-w_\epsilon)$ can be estimated by the norm $\|\cdot\|_{X^{s,\frac{1}{2}+\delta}}$ associated with the Airy flow, since tricky high frequencies is cut off by $P_N$ in \eqref{Auxiliary equation} so that the norm comparability (Lemma~\ref{Lem:norm equiv in low freq}) can be used.

\subsubsection{Estimate for the difference $\|u_\epsilon-v_\epsilon\|_{X_\ep^{0,\frac{1}{2}+\delta}}$}\label{Sec: Estimate for the difference 1}

Inserting the temporal smooth cut-offs in the integral equations \eqref{integral BBM} and \eqref{Auxiliary equation}, we write the difference
$$(u_{\ep}-v_{\ep})(t)=\eta_1(t)S_\ep(t)(1-P_{\leq N})u_{\ep,0}- \eta_1(t)\int_0^tS_\ep(t-t_1)\eta_T(t_1)\frac{\pa_x}{\langle\epsilon\partial_x\rangle^2}(u_\ep^2-v_\ep^2)(t_1)dt_1,$$
where $\eta_T=\eta(\frac{\cdot}{T})$. Then, by the linear estimates in the Fourier restriction norms (Lemma~\ref{Lem:linear estimates of Xsb}), we have
$$\begin{aligned}
\|u_{\ep}-v_{\ep}\|_{X_{\epsilon}^{0,\frac{1}{2}+\delta}}
&\lesssim \|(1-P_{\leq N})u_{\ep,0}\|_{L^2}+ \left\|\eta_{T}\frac{\pa_x}{\la\ep\partial_x\ra^2}\big((u_\ep+v_\ep)(u_\ep-v_\ep)\big)\right\|_{X_{\ep}^{0,\delta-\frac{1}{2}}}\\
&\lesssim N^{-s}\|u_{\ep,0}\|_{H^{s}}+ T^{8\delta}\bigg\|\frac{\pa_x}{\la\ep\partial_x\ra^2}\big((u_\ep+v_\ep)(u_\ep-v_\ep)\big)\bigg\|_{X_{\ep}^{0,-(\frac{1}{2}-9\delta)}}.
\end{aligned}$$
The bilinear estimate for the difference estimate (Proposition~\ref{Lem:BL for difference}) and the uniform bounds \eqref{all solution uniform bounds} with $N=\frac{1}{2}\epsilon^{-\frac{2}{5}}$ yield
$$\begin{aligned}
\|u_{\ep}-v_{\ep}\|_{X_{\epsilon}^{0,\frac{1}{2}+\delta}}
&\lesssim N^{-s}R
+ T^{8\delta}\big(\|u_{\ep}\|_{X_{\epsilon}^{1,\frac{1}{2}+\delta}}+\|v_{\ep}\|_{X_{\epsilon}^{1,\frac{1}{2}+\delta}}\big)\|u_{\ep}-v_{\ep}\|_{X_{\epsilon}^{0,\frac{1}{2}+\delta}}\\
&\lesssim \epsilon^{\frac{2}{5}s}R+ \frac{c_0}{R}\cdot 2R\|u_{\ep}-v_{\ep}\|_{X_{\epsilon}^{0,\frac{1}{2}+\delta}}
\lesssim \ep^{\frac{2}{5}s}R+ 2c_0\|u_{\ep}-v_{\ep}\|_{X_{\epsilon}^{0,\frac{1}{2}+\delta}}.
\end{aligned}$$
Therefore, taking sufficiently small $c_0>0$, it follows that $\|u_{\ep}-v_{\ep}\|_{X_{\ep}^{0,\frac{1}{2}+\delta}}\lesssim_R\ep^{\frac{2}{5}s}$.

\subsubsection{Estimate for the difference $\|v_{\ep}-w\|_{X^{0,\frac{1}{2}+\delta}}$}
Similarly as before, we put the temporal cut-offs in the difference between the integral equations for $v_{\ep}$ and $w$, and take the Fourier restriction norm associated with the Airy flow. Subsequently, we obtain that
$$\begin{aligned}
\|v_{\ep}-w\|_{X^{0,\frac{1}{2}+\delta}}
&\leq \|\eta_1(t)(P_{\leq N}-1)w\|_{X^{0,\frac{1}{2}+\delta}}
+\| \eta_1(t)(S_\ep(t)-S(t))P_{\leq N}u_{\ep,0}\|_{X^{0,\frac{1}{2}+\delta}}\\
&\quad +\|\eta_1(t)S(t)P_{\leq N}(u_{\ep,0}-w_0)\|_{X^{0,\frac{1}{2}+\delta}} \\
&\quad +\left\|\eta_1(t)\int_0^tS_\ep(t-t_1)\eta_{T}(t_1)\frac{\pa_x}{\la\ep\partial_x\ra^2}P_{\leq N}\big((v_\ep+w)(v_\ep-w)(t_1)\big)dt_1\right\|_{X^{0,\frac{1}{2}+\delta}}\\
&\quad + \left\|\eta_1(t)\int_0^t(S_\ep(t-t_1)-S(t-t_1))\eta_{T}(t_1)\frac{\pa_x}{\la\ep\partial_x\ra^2}P_{\leq N}(w(t_1))^2dt_1\right\|_{X^{0,\frac{1}{2}+\delta}}\\
&\quad + \left\|\eta_1(t)\int_0^tS(t-t_1)\eta_{T}(t_1)\partial_x\left( 1 - \frac{1 }{\la\ep\partial_x\ra^2} \right)P_{\leq N}(w(t_1))^2dt_1\right\|_{X^{0,\frac{1}{2}+\delta}} \\
& \quad =:\textup{(I)} + \textup{(II)} + \textup{(III)} + \textup{(IV)} + \textup{(V)} + \textup{(VI)}.
\end{aligned}$$
In the upper bound, we group the terms that can be estimated using similar methods, and handle them together.
First, for the first three terms, applying the linear estimates in the Fourier restriction norms (Lemma~\ref{Lem:linear estimates of Xsb}) and Lemma~\ref{Lem:difference linear sol in Xsb}, we obtain

\begin{equation}\label{ineq : v-w 1}
\begin{aligned}
\textup{(I)} + \textup{(II)} + \textup{(III)} 
& \lesssim \|(1-P_{\leq N})w\|_{X^{0,\frac{1}{2}+\delta}} 
+\ep^{\frac{2}{5}s}\|u_{\ep,0}\|_{H^s} 
+\|u_{\ep,0}-w_0\|_{L_x^2} \\
& \lesssim { \ep^{\frac{2}{5}s}\|w\|_{X^{s,\frac{1}{2}+\delta}} }
+\ep^{\frac{2}{5}s}R 
+\|u_{\ep,0}-w_0\|_{L_x^2} 
\lesssim \ep^{\frac{2}{5}s}R 
+\|u_{\ep,0}-w_0\|_{L_x^2}.
\end{aligned}
\end{equation}
For $\textup{(V)}$, using Lemma~\ref{Lem:difference linear sol in Xsb}, the bilinear estimate for the linear KdV equation (See Remark~\ref{cf. KdV flow}) and the choice of $T$, we have
\begin{equation}\label{ineq : v-w 2}
\textup{(V)}
\lesssim \ep^{\frac{2}{5}s}T^{1-\delta}\left\|\frac{\partial_x}{\la\ep\partial_x\ra^2}w^2\right\|_{X^{s,\delta-\frac{1}{2}}}
\lesssim \ep^{\frac{2}{5}s}T^{8\delta}\|w\|_{X^{0,\frac{1}{2}+\delta}}^2
\lesssim c_0\ep^{\frac{2}{5}s}R,
\end{equation}
where in the last inequality, we used that $0<T\leq 1$ and $1-\delta < 8\delta$ for sufficiently small $\delta>0$.
For the remaining terms, applying the inhomogeneous term estimate in the Fourier restriction norm (Lemma~\ref{Lem:linear estimates of Xsb} (4)) and the fact that $1-\frac{1}{\la\ep\xi\ra^2}=\frac{\ep^2\xi^2}{\la\ep\xi \ra^2}\lesssim \ep^{\frac{2}{5}s}|\xi|^{\frac{2}{5}s}\lesssim \ep^{\frac{2}{5}s}\la \xi\ra^{s}$ for $|\xi|\leq \frac{1}{2}\ep^{-\frac{2}{5}}=N$, we obtain
$$\textup{(IV)} + \textup{(VI)}
 \lesssim  \left\|\eta_T(t_1)\frac{\partial_x}{\la\ep\partial_x\ra^2}P_{\leq N}(v_\ep^2-w^2)\right\|_{X^{0,\delta-\frac{1}{2}}}
+\ep^{\frac{2}{5}s}\left\|\la\partial_x\ra^s\eta_T(t_1)\partial_x(w)^2\right\|_{X^{0,\delta-\frac{1}{2}}}.$$
Then, the linear estimates in the Fourier restriction norms (Lemma~\ref{Lem:linear estimates of Xsb}), the bilinear estimate for the linear KdV equation (see Remark~\ref{cf. KdV flow}) lead to
\begin{equation}\label{ineq : v-w 3}
\begin{aligned}
\textup{(IV)} + \textup{(VI)}
& \lesssim T^{\frac{1}{4}-\delta}\|P_{\leq N}\partial_x(v_\ep^2-w^2)\|_{X^{0,-\frac{1}{4}}} 
+ T^{\frac{1}{4}-\delta}\ep^{\frac{2}{5}s}\|\partial_x(w)^2\|_{X^{s,-\frac{1}{4}}}\\
& \lesssim T^{\frac{1}{4}-\delta}(\|P_{\leq N}v_\ep\|_{X^{0,\frac{1}{2}+\delta}}+\|w\|_{X^{0,\frac{1}{2}+\delta}})\|v_\ep-w\|_{X^{0,\frac{1}{2}+\delta}}\\ 
&\quad+ T^{\frac{1}{4}-\delta}\ep^{\frac{2}{5}s}{ \|w\|^2_{X^{s,\frac{1}{2}+\delta}} }\\
& \lesssim 2T^{8\delta}R\|v_\ep-w\|_{X^{0,\frac{1}{2}+\delta}} 
+ T^{8\delta}\ep^{\frac{2}{5}s}R^2\\
& \lesssim 2c_0\|v_\ep-w\|_{X^{0,\frac{1}{2}+\delta}}
+ c_0\ep^{\frac{2}{5}s}R.
\end{aligned}
\end{equation}
In the above, for $P_{\leq N}v_\ep$, we used that $\|P_{\leq N}v_\ep\|_{X^{0,\frac{1}{2}+\delta}} \sim \|P_{\leq N}v_\ep\|_{X_\ep^{0,\frac{1}{2}+\delta}}\lesssim R$ holds by the comparability of norm in low frequencies (Lemma~\ref{Lem:norm equiv in low freq}).

Finally, summing up \eqref{ineq : v-w 1}, \eqref{ineq : v-w 2} and \eqref{ineq : v-w 3}, we prove that $$\|v_{\ep}-w\|_{X^{0,\frac{1}{2}+\delta}}\lesssim \ep^{\frac{2}{5}s}R 
+\|u_{\ep,0}-w_0\|_{L_x^2}+c_0\ep^{\frac{2}{5}s}R+2c_0\|v_\ep-w\|_{X^{0,\frac{1}{2}+\delta}},$$
which immediately implies 
$$\|v_{\ep}-w\|_{X^{0,\frac{1}{2}+\delta}}\lesssim_R \ep^{\frac{2s}{5}}+\|u_{\ep,0}-w_0\|_{L^2_x},$$
since $c_0>0$ is sufficiently small.

\subsection{Extension of the interval of validity (Proof of Theorem \ref{thm: main result 1.2})}\label{sec: proof of the global theorem}
We will extend the interval of validity for the BBM regularization using the conservation laws in \eqref{eq: conservation laws}. Here, we consider the positive time direction only, because the negative one can be treated by the same way. Suppose that
$$\sup_{\epsilon\in(0,1]}\|u_{\ep,0}\|_{H^1}, \|w_0\|_{H^1}\leq R,$$
and let $u_\epsilon(t)$ (resp., $w(t)$) be the global solution to $\textup{BBM}_\epsilon$ \eqref{eq: BBM} (resp., the KdV \eqref{eq: KdV}) with initial data $u_{\ep,0}$ (resp., $w_0$).

For $\textup{BBM}_\epsilon$, by the conservation law $E_{\textup{BBM}_\epsilon}^{(1)}[u_\epsilon(t)]=E_{\textup{BBM}_\epsilon}^{(1)}[u_{\epsilon,0}]$, it follows that 
$$\|u_\epsilon(t)\|_{L^2}^2\leq E_{\textup{BBM}_\epsilon}^{(1)}[u_{\epsilon}(t)]=E_{\textup{BBM}_\epsilon}^{(1)}[u_{\epsilon,0}]\leq R^2.$$
On the other hand, for the conservation law $E_{\textup{BBM}_\epsilon}^{(2)}[u_\epsilon(t)]=E_{\textup{BBM}_\epsilon}^{(2)}[u_{\epsilon,0}]$, applying the Gagliardo-Nirenberg inequality
$$\|u\|_{L^3}^3\leq C_{GN}\|u\|_{L^2}^{\frac{5}{2}}\|\partial_x u\|_{L^2}^{\frac{1}{2}}$$
to the cubic terms in $E_{\textup{BBM}_\epsilon}^{(2)}[u_\epsilon(t)]$ and Young's inequality $ab\leq \frac{3}{4}a^{\frac{4}{3}}+\frac{1}{4}b^4$, we obtain 
$$\begin{aligned}
\frac{R^2}{2}+\frac{1}{3}C_{GN}R^3
& \geq E_{\textup{BBM}_\epsilon}^{(2)}[u_{\epsilon,0}]
=E_{\textup{BBM}_\epsilon}^{(2)}[u_\epsilon(t)]\\
&\geq\frac{1}{2}\|\partial_x u_\epsilon(t)\|_{L^2}^2
-\frac{C_{GN}}{3}\|u_\epsilon(t)\|_{L^2}^{\frac{5}{2}}\|\partial_x u_\epsilon(t)\|_{L^2}^{\frac{1}{2}}\\
&\geq \frac{1}{4}\|\partial_x u_\epsilon(t)\|_{L^2}^2-\frac{3}{4}\bigg(\frac{C_{GN}}{3}\bigg)^{\frac{4}{3}}\|u_\epsilon(t)\|_{L^2}^{\frac{10}{3}}\\
&\geq \frac{1}{4}\|\partial_x u_\epsilon(t)\|_{L^2}^2-\frac{3}{4}\bigg(\frac{C_{GN}}{3}\bigg)^{\frac{4}{3}}R^{\frac{10}{3}}
\end{aligned}$$
so that $\|\partial_x u_\epsilon(t)\|_{L^2}^2\leq 2R^2+3(\frac{C_{GN}}{3})^{\frac{4}{3}}R^{\frac{10}{3}}$. Repeating the same estimate for the KdV \eqref{eq: KdV}, we prove the same bound. Therefore, we prove that there exists $L>0$ such that
\begin{equation}\label{eq: global-in-time energy bound}
\sup_{\epsilon\in(0,1]}\|u_{\epsilon}(t)\|_{C(\mathbb{R}; H^1)}, \|w(t)\|_{C(\mathbb{R}; H^1)}\leq L\max\{R, R^\frac{5}{3}\}.
\end{equation}

Now, we choose a short time $T=T(L\max\{R, R^\frac{5}{3}\})>0$ from Theorem \ref{thm: main result 1.1}. Then, for each $k\in\mathbb{N}$, applying Theorem \ref{thm: main result 1.1} with \eqref{eq: global-in-time energy bound} to the evolutions from time $(k-1)T$ to $kT$, it follows that 
$$\|u_{\ep}(t)-w(t)\|_{C_t([(k-1)T, kT]; L_x^2)}\leq C_R\|u_{\ep}((k-1)T)-w((k-1)T)\|_{L^2_x}+C_R\ep^{\frac{2}{5}},$$
for some $C_R\geq 2$. Here, an important remark is that $T$ and $C_R$ depend only on $L\max\{R, R^{\frac{5}{3}}\}$, but independent of $\epsilon\in(0,1]$ and $k\in\mathbb{N}$. Hence, by simple induction, we obtain that 
$$\begin{aligned}
\|u_{\ep}(t)-w(t)\|_{C_t([0, kT]; L_x^2)}&\leq (C_R)^k\|u_{\ep,0}-w_0\|_{L^2_x}+\ep^{\frac{2}{5}}\big\{C_R+(C_R)^2+(C_R)^3+\cdots (C_R)^k\big\}\\
&\sim (C_R)^k\big\{\|u_{\ep,0}-w_0\|_{L^2_x}+\ep^{\frac{2}{5}}\big\}.
\end{aligned}$$
Therefore, we conclude that there exists $K>0$ such that 
$$\|u_{\ep}(t)-w(t)\|_{L_x^2}\lesssim \big\{\|u_{\ep,0}-w_0\|_{L^2_x}+\ep^{\frac{2}{5}}\big\} e^{K|t|},$$
because for any large $|t|\geq 1$, there exists $k$ such that $|t|\sim k T$.

\appendix

\section{Proof of conservation laws for the rescaled BBM equation}\label{sec: proof of conservation laws for the rescaled BBM equation}

For readers' convenience, we present formal calculations to show the three conservation laws \eqref{eq: conservation laws} for the rescaled BBM equation \eqref{eq: BBM}. Then, a rigorous proof follows from the standard persistence of regularity argument (see \cite{Cazenave2003}, for example). 

Suppose that $u_\epsilon=u_\epsilon(t)$ is a smooth and localized solution to
$$\partial_t(1-\epsilon^2\partial_x^2)u_\epsilon+\partial_x^3u_\epsilon+\partial_x(u_\epsilon^2)=0.$$
Then, by direct calculations and substituting $\partial_t u_\epsilon$ by the equation, we prove the first two conservation laws;
$$\frac{d}{dt}E_{\textup{BBM}_\epsilon}^{(0)}[u_\epsilon]=\int_{\mathbb{R}} \partial_tu_\epsilon dx=\int_{\mathbb{R}} \partial_x\big(\epsilon^2\partial_t\partial_x u_\epsilon-\partial_x^2 u_\epsilon-u_\epsilon^2\big)dx=0$$
and
$$\begin{aligned}
\frac{d}{dt}E_{\textup{BBM}_\epsilon}^{(1)}[u_\epsilon]&=2\int_{\mathbb{R}} u_\epsilon\partial_t(1-\epsilon^2\partial_x^2)u_\epsilon dx=-2\int_{\mathbb{R}} u_\epsilon\big\{\partial_x^3u_\epsilon+\partial_x(u_\epsilon^2)\big\}dx\\
&=\int_{\mathbb{R}} \partial_x\bigg\{(\partial_x u_\epsilon)^2-\frac{4}{3}u_\epsilon^3\bigg\}dx=0.
\end{aligned}$$
For the third quantity, using the equation again and substituting $V=\frac{1}{1-\epsilon^2\partial_x^2}(\partial_x^2u_\epsilon+u_\epsilon^2)$, we prove that  
$$\begin{aligned}
\frac{d}{dt}E_{\textup{BBM}_\epsilon}^{(2)}[u_\epsilon]&=-\int_{\mathbb{R}} \big(\partial_x^2u_\epsilon+u_\epsilon^2\big)\partial_tu_\epsilon dx=\int_{\mathbb{R}} \big(\partial_x^2u_\epsilon+u_\epsilon^2\big) \bigg\{\frac{\partial_x}{1-\epsilon^2\partial_x^2}\big(\partial_x^2u_\epsilon+u_\epsilon^2\big)\bigg\}dx\\
&=\int_{\mathbb{R}} \big\{(1-\epsilon^2\partial_x^2)V\big\} (\partial_xV)dx=\frac{1}{2}\int_{\mathbb{R}} \partial_x\big\{V^2+\epsilon^2(\partial_xV)^2\big\}dx=0.
\end{aligned}$$

\bibliographystyle{abbrv}
\bibliography{Reference}

\end{document}